\newcommand{\qand}{\quad\And\quad}
\newcommand{\qwhere}{\quad\text{where}\quad}
\newcommand{\B}{{\mathcal{B}}}
\newcommand{\C}{{\mathbb{C}}}
\newcommand{\R}{{\mathbb{R}}}
\newcommand{\N}{{\mathbb{N}}}
\newcommand{\eps}{\varepsilon}
\newcommand{\inr}[2]{\left\langle#1,#2\right\rangle}
\renewcommand{\Re}{\mathrm{Re}}
\newcommand{\wt}{\widetilde}
\newcommand{\abs}[1]{\mleft|#1\mright|}
\newcommand{\rabs}[1]{|#1|}
\newcommand{\magn}[1]{\left\|#1\right\|}
\newcommand{\rmagn}[1]{\|#1\|}
\newcommand{\pare}[1]{\mleft(#1\mright)}
\newcommand{\set}[1]{{\left\{{#1}\right\}}}
\newcommand{\bmat}[1]{\begin{bmatrix}#1\end{bmatrix}}
\newcommand{\spliteq}[2]{\begin{equation}#1\begin{split}#2\end{split}\end{equation}}
\newcommand{\eq}[1]{\begin{equation}{#1}\end{equation}}
\DeclareMathOperator*{\E}{\mathbb{E}}
\DeclareMathOperator*{\argmin}{arg\,min}
\DeclareMathOperator*{\argmax}{arg\,max}
\DeclareMathOperator*{\col}{Col}
\DeclareMathOperator*{\spn}{span}
\DeclareMathOperator*{\rank}{rank}
\DeclareMathOperator*{\tr}{tr}
\DeclareMathOperator{\ball}{Ball}
\DeclareMathOperator{\diag}{diag}
\DeclareMathOperator{\dist}{dist}
\newtheorem{theorem}{Theorem}[section]
\newtheorem{lemma}[theorem]{Lemma}
\newtheorem{corollary}[theorem]{Corollary}
\newtheorem*{rep@theorem}{\rep@title}
\newcommand{\newreptheorem}[2]{%
\newenvironment{rep#1}[1]{%
 \def\rep@title{#2 \ref{##1}}%
 \begin{rep@theorem}}%
 {\end{rep@theorem}}}
\theoremstyle{definition}
\newtheorem{remark}{Remark}
\crefname{equation}{}{}
\title{Matrix Factorizations with Uniformly Random Pivoting}
\author{Isabel Detherage \\ UC Berkeley \and Rikhav Shah\footnote{Supported by NSF CCF-2420130} \\ UC Berkeley}
\date{\today}
\DeclareMathOperator{\off}{off}
\newcommand{\tot}{^{(t)}}
\newcommand{\totp}[1]{^{(t+#1)}}
\newcommand{\totm}[1]{^{(t-#1)}}
\newcommand{\totpm}[1]{^{(t\pm #1)}}
\newcommand{\tots}{^{(t)*}}
\newcommand{\tos}{^{(s)}}
\newcommand{\factor}{\pare{1-\frac{k(k-1)}{n(n-1)}}}
\newcommand{\twofactor}{\pare{1-\frac2{n(n-1)}}}
\newcommand{\tacc}{T_{\textnormal{acc}}}
\begin{document}

\maketitle

\begin{abstract}
This paper highlights a formal connection between two families of widely used matrix factorization algorithms in numerical linear algebra. One family consists of the Jacobi eigenvalue algorithm and its variants for computing the Hermitian eigendecomposition and singular value decomposition. The other consists of Gaussian elimination and the Gram-Schmidt procedure with various pivoting rules for computing the Cholesky decomposition and QR decomposition respectively.

Both families are cast as special cases of a more general class of factorization algorithms. 
We provide a randomized pivoting rule that applies to this general class (which differs substantially from the usual pivoting rules for Gaussian elimination / Gram-Schmidt) which admits a unified analysis of the entire class of algorithms. The result is the same linear rate of convergence for each algorithm, irrespective of which factorization it computes.

One important consequence of this randomized pivoting rule is a provable polynomial bound on the numerical stability of the Jacobi eigenvalue algorithm without any preconditioning, which addresses a longstanding open problem of Demmel and Veseli\'c `92 \cite{b0}.

\end{abstract}

\section{Introduction}
At first glance, the Jacobi eigenvalue algorithm for computing the eigendecomposition of Hermitian matrices and the Gram-Schmidt procedure for computing the QR decomposition of tall matrices are quite distinct: they take different inputs, compute different factorizations, have different asymptotic complexities, etc. However, there is an important commonality: both methods are iterative and consider only pairs of columns of their inputs at a time. In this work, this connection is taken further, and both algorithms are realized as special cases of a more general matrix factorization algorithm.

Our primary result, \Cref{a0}, is a unified analysis of all special cases of this general procedure when using a particular randomized pivoting rule; we show that these algorithms are all linearly convergent in expectation, and further that the rate of convergence is unaffected by which particular factorization is being computed.

In finite arithmetic, we provide a framework for showing that instances of this general procedure are numerically stable. This framework is applied to several orthogonalization methods (\Cref{a1}), as well as the Jacobi eigenvalue algorithm (\Cref{a2}, \Cref{a3}, \Cref{a4}), building on the work of \cite{b0} in the latter case.

We first give a brief overview of the history and context of these algorithms. As these algorithms have been extensively studied, we cannot give a comprehensive overview. We instead highlight their origins, practical relevance today, and interactions between them.

\medskip
\noindent
\textit{Eigendecomposition / Singular value decomposition:}
The classical Jacobi eigenvalue algorithm (which we henceforth refer to as the two-sided Jacobi eigenvalue algorithm\footnote{
The literature also uses ``two-sided'' to refer to a version of the Jacobi eigenvalue algorithm due to \cite{b1} for computing the singular value decomposition; that method is different from the version of the Jacobi eigenvalue for computing the singular value decomposition that this paper considers.}) is an iterative algorithm for computing an approximate eigendecomposition of a given Hermitian\footnote{One can replace $\C$ with $\R$ throughout the paper. Hermitian becomes symmetric, (Special) unitary becomes (special) orthogonal, and the Hermitian adjoint $(\cdot)^*$ becomes the transpose $(\cdot)^\top$.} matrix that dates back to 1846 \cite{b2}.
It was rediscovered and popularized in the 1940s and 50s when the advent of computers made large numerical computations feasible (we refer the reader to \cite{b3} for a detailed history).
Today, it is included in popular textbooks \cite{b4,b5} and software packages \cite{b6}.
Although it is often less preferred due to a slower runtime \cite{b4}, it has several strengths when compared with other popular methods for diagonalization. First, if the given matrix $B$ is already close to diagonal, the arithmetic cost is greatly reduced \cite{b7}. Second, the computation is highly parallelizable, and each thread only needs a small amount of memory (see, for example, \cite{b8}). Third, numerical experiments reveal classes of matrices for which all the eigenvalues appear to be computed with small relative error \cite{b0}.

The corresponding algorithm for computing the singular value decomposition of a given matrix $A\in\C^{d\times n}$, today called the one-sided Jacobi eigenvalue algorithm, was introduced as an independent algorithm by Hestenes \cite{b9}, then rediscovered and connected to the classical two-sided Jacobi eigenvalue algorithm by Chartres \cite{b10}. It can be viewed as
implicitly running the two-sided Jacobi eigenvalue algorithm on the matrix $B=A^*A$, without ever forming $B$. It enjoys many of the same advantages as the two-sided Jacobi eigenvalue algorithm \cite{b11,b0}.

The numerical stability of both the one- and two-sided variants was investigated by Demmel and Veseli{\'c} in a highly influential paper \cite{b0}.
They primarily focus on the accuracy with which the small eigenvalues of positive definite matrices are computed. They find ``modulo an assumption based on extensive numerical tests'' that the Jacobi eigenvalue algorithm essentially produces the best approximation of the small eigenvalues one could hope for in the floating point arithmetic model. A family of matrices with more problematic behavior that those tested by \cite{b0} was published by \cite{b12}, though they do not result in instability of the method overall.

\medskip
\noindent
\textit{Cholesky decomposition / QR decomposition: } Gaussian elimination is the oldest method for factoring a given matrix $B$ as $B=LU$ where $L$ is lower triangular and $U$ is upper triangular \cite{b13}. When $B$ is positive definite, one is guaranteed that $U=L^*$ resulting in the Cholesky decomposition $B=LL^*$. Gaussian elimination with various pivoting rules is the primary algorithm used in practice for the Cholesky decomposition \cite{b14,b15,b6}.

There are many methods for the QR decomposition. 
Both classical and modified Gram-Schmidt applied to $A\in\C^{d\times n}$ are mathematically equivalent to Gaussian elimination applied to the matrix $B=A^*A$ \cite{b16}.
Some other algorithms actively exploit the connection between the QR and Cholesky decompositions: \cite{b17} proposes computing the Cholesky decomposition $A^*A=R^*R$ and then computing $Q=AR^{-1}$, and the algorithm(s) of \cite{b18} and \cite{b19} explore this idea further, with modifications to improve stability.

\medskip
There is a notable way to use the Cholesky decomposition and one-sided Jacobi eigenvalue in conjunction for the purpose of a symmetric eigendecomposition as well. The idea is to compute the Cholesky factorization $B=LL^*$ of an input, and compute the SVD of $L^*=U\Sigma V^*$, which then implies the eigendecomposition $B=V\Sigma^2V^*$. The proposal of \cite{b20} is to compute this SVD by applying the one-sided Jacobi eigenvalue algorithm to $L$ (not to $L^*$). This technique avoids the aforementioned assumption needed by \cite{b0} in their theoretical analysis of the numerical stability of the Jacobi eigenvalue algorithm \cite{b21}. However, while it ensures accurate eigenvalues, it does not ensure accurate eigenvectors \cite{b22}. Nevertheless, it is observed to be faster \cite{b0}. The version of this technique which applies one-sided Jacobi to $L^*$ is also faster, though it lacks a theoretical or practical advantage in terms of numerical stability \cite{b0}. Several additional variants of this idea were studied by \cite{b23}.

Several works take inspiration from algorithms for one task for the other. For instance, \cite{b24} proposes a Jacobi-like method for the QR decomposition, applying Givens rotations to transform a matrix $A$ into an upper triangular one $R$. The accumulation of all the Givens rotations then forms $Q$ (which is akin to the method for the QR decomposition using Householder reflections \cite{b14}). This method for QR now commonly appears in numerical linear algebra textbooks, e.g. \cite{b4}.

\medskip
The central focus of this paper is the striking similarity between the operations performed by the one-sided Jacobi eigenvalue algorithm and the modified Gram-Schmidt procedure. 
This similarity has been previously noted, for instance \cite{b25} observes that the operations are actually {the same} under a certain kind of infinite matrix limit.
We show that this similarity can be exploited to obtain a unified analysis of these algorithms and more.

\subsection{Paper overview}
In \Cref{a5}, we present the Jacobi eigenvalue algorithm and modified Gram-Schmidt procedure (MGS) in a way that highlights their similarity. In \Cref{a6} we formally describe an abstract algorithm which encompasses the Jacobi eigenvalue algorithm and MGS as special cases.

Our results about this generalized algorithm are presented in the remaining sections. In \Cref{a7} we introduce our main tool for evaluating progress in the algorithm, a certain potential function, and we relate this potential function to other standard metrics. \Cref{a8} focuses on exact-arithmetic implementations of the algorithm, in the real RAM model. For inputs are not too ill-conditioned, we show that $O(n^2\log(n/\delta))$ iterations of this procedure suffice to produce $\delta$-approximate factorizations (stated precisely in \Cref{a0}). For many reasonable special cases of this generalized algorithm, each iteration costs only $O(n)$ arithmetic operations, resulting in many $\wt O(n^3)$-time factorization algorithms. Additionally, this section confirms a recent conjecture of Steinerberger about the convergence of particular method for orthogonalization \cite{b26} (see \Cref{a9}).

\Cref{a10} studies the general algorithm in finite arithmetic. Our focus is on proving statements roughly of the form ``when machine precision is at most $n^{-c}$ (for an explicit, small $c$), then the results of the algorithm have a guaranteed level of accuracy.''
Our analysis does not optimize $c$, and we do not advertise bounds that are particularly attractive for large matrices in single and double floating point arithmetic. Nevertheless, the dependence on $n$ on the bits of precision required is shown to be logarithmic, and we prove that quad precision can handle a substantial range of inputs.

We have two primary results about finite arithmetic: the first is that a large class of orthogonalization algorithms are numerically stable (including an algorithm for computing the QR decomposition specifically).
The second addresses the ``assumption based on extensive numerical tests'' needed by Demmel and Veseli{\'c} in \cite{b0} to show stability of the Jacobi eigenvalue algorithm. This assumption is that a certain condition number remains bounded as iterations are performed. Their experiments showed that it doesn't seem to rise more than a constant amount above its initial value, but their proofs only bound it by certain exponentially growing quantities (exponential either in the size of the input or number of iterations performed). We show that with uniformly random pivoting, it is bounded by a polynomial in the input size and number of iterations performed (see \Cref{a11}).

\begin{center}
    \begin{tabular}{|l|l|}
        \hline
        \textbf{Notation} & \textbf{Meaning} \\
        \hline
        $\magn{\cdot}$ & $\ell_2$ vector or operator norm\\
        $\magn{\cdot}_F$ & Frobenius norm\\
        $\lambda_j(A)$ & $j^{\textnormal{th}}$ largest eigenvalue of $A$\\
        $\kappa(A)=\magn A\cdot\rmagn{A^{-1}}$               & Condition number of $A$\\
        $D_A = \diag(A_{11}, A_{22}, \hdots, A_{nn})$          & Diagonal part of $A$\\
        $\hat A=D_A^{-1/2}AD_A^{-1/2}$ & Diagonal-normalization of $A$\\
        $\hat\kappa(\cdot)$&Diagonal normalized condition number, $\hat\kappa(A):=\kappa(\hat A)$\\
        $A\odot B$ & Hadamard or entrywise product, $(A\odot B)_{ij}=a_{ij}b_{ij}$\\
        $O(\cdot),\wt O(\cdot)$ & $O(\cdot)$ hides constants, $\wt O(\cdot)$ hides polylogarithmic factors.\\
        $[n]=\set{1,\ldots,n}$ & Set of indices 1 through $n$\\
        \hline
        \end{tabular}
\end{center}

\section{Connecting SVD and QR}\label{a5}
\subsection{The core connection}
Here we describe a simple connection between the SVD and QR decomposition (and the corresponding connection between the eigendecomposition and Cholesky decomposition) and algorithms for computing them.
To see the connection, we first express the QR decomposition and a factorization that is almost the SVD in a common form: given $A$ with full column rank, write
\eq{\label{a12}A=QT^{-1}}
where $Q$ has orthogonal columns and $T$ is nonsingular. When $T$ is unitary, by further decomposing $Q=U\sqrt{Q^*Q}$, one obtains the singular value decomposition. When $T$ is upper triangular and $Q^*Q=I$, this is exactly the QR decomposition. Analogously, the eigendecomposition and Cholesky decomposition of $B=A^*A$ can both be expressed as $B=(T^{-1})^*DT$ where $D=Q^*Q$. The four cases are summarized in the following table.

\begin{table}[H]\centering
\begin{tabular}{|l|c|c|}\hline
               & $B=(T^{-1})^*DT^{-1}$            & $A=QT^{-1}$ \\\hline
\begin{tabular}[c]{@{}l@{}}$T$ is unitary and\\$D=Q^*Q$ is diagonal\end{tabular} & Eigendecomposition & Singular value decomposition  \\\hline
\begin{tabular}[c]{@{}l@{}}$T$ is upper triangular and\\$D=Q^*Q=I$\end{tabular}
               & Cholesky decomposition & QR decomposition   \\\hline\end{tabular}
\end{table}

\noindent
Notably, both Modified Gram-Schmidt (MGS) and one-sided Jacobi can be interpreted as methods for producing an orthogonal basis $Q=\bmat{q_1&\cdots&q_n}$ for a given collection of vectors, expressed as columns of $A=\bmat{a_1&\cdots&a_n}$ (along with the corresponding column operation sending $A$ to $Q$). The algorithms themselves closely resemble each other too: they both iteratively pick pairs of indices $(i,j)$ and replace the columns $a_i,a_j$ with a different orthogonal basis for their span. By repeating this many times with appropriate selection of $(i,j)$ each time, the collection of vectors will approach an orthogonal set.
More specifically let $A\tot=\bmat{a_1\tot&\cdots&a_n\tot}$ be the state after this has been done for $t$ pairs $(i,j)$. For each $t$, the Jacobi eigenvalue algorithm selects a pair $(i,j)$ and then updates
\spliteq{\label{a13}}{
    a_i\totp1&=\cos(\theta)a_i\tot + \sin(\theta)a_j\tot
    \\
    a_j\totp1&=-\sin(\theta)a_i\tot + \cos(\theta)a_j\tot
    \\
    a_\ell\totp1&=a_\ell\tot \quad\forall\ell\not\in\set{i,j}}
where $\theta$ is such that the resulting columns $a_i\totp1$ and $a_j\totp1$ are orthogonal. MGS selects $i<j$ then updates
\spliteq{\label{a14}}{
a_{j}\totp1&=\frac{a\tot_{j}-\inr{a\tot_{j}}{a\tot_{i}}a\tot_{i}}{\magn{a\tot_{j}-\inr{a\tot_{j}}{a\tot_{i}}a\tot_{i}}}
\\
a_\ell\totp1&=a_\ell\tot \quad\forall\ell\neq j.}
The update equation \eqref{a13} is a Givens rotation of $A\tot$ (also called a plane rotation); importantly it is unitary. The product of these rotations forms $T$ in the decomposition \eqref{a12}.
The update equation \eqref{a14} is an elementary matrix operation followed by a scaling; importantly, it is upper triangular. The product of these elementary operations forms $T$ in the decomposition \eqref{a12}.

In both cases, the result of this update is that the $i$th and $j$th columns of $A\totp1$ will be orthogonal. Despite their similarity, the existing analysis of these algorithms is quite different, as is their empirical performance measured in terms of computational and communication costs and numerical stability.

\subsection{Pivoting rules}
In both cases, the strategy for picking $(i,j)$ at each time $t$ is determined by a ``pivoting rule''.
Multiple pivoting rules have been proposed; we group them into three categories to discuss: fixed, adaptive, and randomized. Before doing so, we must clarify some terminology.
There is some subtlety in what the literature refers to as ``one iteration'' of either procedure. For the Jacobi eigenvalue algorithm, one iteration is sometimes taken to mean one ``sweep'' (which we describe in a moment) of all $n(n-1)/2$ pairs $(i,j)$. For MGS (more specifically, left-looking MGS), 
one iteration typically refers to the grouping of updates
\eq{\label{a15}
(1,i),(2,i),\ldots,(i-1,i)}
for each $i$. For Gaussian elimination (and right-looking MGS), it typically refers to the grouping of updates
\eq{\label{a16}
(i,i+1),(i,i+2),\ldots,(i,n)}
for each $i$. Because of this, the word ``pivot'' in those contexts typically refer to the single column index $i$. In this paper, we refer to each selection of $(i,j)$ and update of the corresponding column(s) as one iteration, i.e. $A\tot$ defined by either \eqref{a13}, \eqref{a14} is the state after $t$ iterations. One pivot then corresponds to a pair of column indices $(i,j)$.

\textit{Fixed pivot rules: }
These are rules that fix a sequence of $(i,j)$ ahead of time.
The ones we highlight for the Jacobi eigenvalue algorithm are called cyclic or sweep rules, due to \cite{b27}, see also \cite{b28}.
The row-cyclic version uses the sequence
\[(1,2),(1,3),(1,4),(1,5),\ldots,(1,n),(2,3),(2,4),(2,5),\ldots,(2,n),\ldots (n-1,n)\]
and repeats it over and over again. The column-cyclic version uses the sequence
\[(1,2),(1,3),(2,3),(1,4),(2,4),(3,4),(1,5),(2,5),(3,5),(4,5),\ldots,(n-1,n)\]
and repeats it over and over again. Each run through the sequence of $n(n-1)/2$ pairs is referred to as one ``sweep.''
In the context of Gaussian elimination and MGS, those sweeps correspond to applying the groupings \eqref{a15} or \eqref{a16} for $i=1,\ldots,n-1$. Running MGS for just one or two sweeps are both popular proposals.

For appropriately chosen rotations, \cite{b28} show that row-cyclic and column-cyclic Jacobi converges, and \cite{b7} improve this to (eventual) quadratic convergence. However, there is no rigorous theory to predict the number of sweeps required to achieve a desired distance to diagonal \cite{b17}. In practice, it is observed to be a constant number of sweeps \cite{b4}. Better theoretical results are available for the next category of pivoting rules.

\textit{Adaptive pivot rules:} These are pivoting rules that use the current value of $A\tot$ to decide the next pivots. The original rule proposed by Jacobi \cite{b2}, which we call the greedy pivoting rule, was to pick
\eq{\label{a17}(i,j)=\argmax_{i,j:i\neq j}\abs{\inr{a_i}{a_j}}.}
This is somewhat similar to column-pivoting rule of Gram-Schmidt (implicitly equivalent to complete pivoting rule of Gaussian elimination). This rule for Gram-Schmidt maintains a list of indices $\mathcal I$ initialized to $[n]$. Then
\eq{\label{a18}i=\argmax_{i\in\mathcal I}\magn{a_i}} is computed, $i$ is removed from $\mathcal I$, and the updates $\set{(i,j):j\in\mathcal I}$ are performed (in any order). This is repeated until $\mathcal I$ is empty.
Strictly speaking, these rules will not necessarily compute the QR decomposition of $A$, since we no longer are assured that the update pairs $(i,j)$ satisfy $i<j$. However, if one lets $\sigma$ be the permutation which records the order in which indices are removed from $\mathcal I$, then we are assured $\sigma(i)<\sigma(j)$ and therefore the method computes QR decomposition of $AP_\sigma$ where $P_\sigma$ is the permutation matrix associated with $\sigma$.

The same pivoting strategy has been considered for one-sided Jacobi \cite{b29}. Numerical experiments showed that this pivoting strategy can improve the rate of convergence in several (but not all) cases (see \cite{b29} for more details).

\textit{Randomized pivot rules:}
There seems to be much less attention given to randomized pivot rules. A recent work of Chen et al. \cite{b30} considers the partial Cholesky and QR decompositions for computing low-rank approximations. They study a randomized variant of the complete pivoting rule \eqref{a18} where $i\in\mathcal I$ is sampled with probability proportional to $\magn{a_i}^2$. This and similar ideas had been considered before as well \cite{b31,b32,b33}.

One exceedingly simple randomized rule is just to sample $(i,j)$ uniformly at random among all pairs of indices. We were unable to find a reference that explicitly describes this rule (or any randomized rule) for the Jacobi eigenvalue algorithm; one reason that this is surprising is that the proof of global convergence of the greedy pivoting rule \eqref{a17} essentially applies the probabilistic method to the hypothetical algorithm which uses this uniformly random pivot (see, for example, Section 5 of \cite{b34}).

In the context of orthogonalization, we note an important distinction between sampling $(i,j)$ uniformly at random among all pairs of indices and just pairs of indices with $i<j$ (or $\sigma(i)<\sigma(j)$ for some permutation $\sigma$). In the latter case, we are assured the computed decomposition (if the method converges) is the QR decomposition $AP_\sigma=QR$. However, in the former case, we are given no assurances about $R$ being upper triangular. That rule (without the $i<j$ constraint) was proposed by Steinerberger \cite{b26}, and both rules (both with and without the $i<j$ constraint) were concurrently proposed by the present authors in \cite{b35}. 
Both those works were motivated by a designing a preconditioner for the Kaczmarz linear system solver, and made analogies of the method to the Kaczmarz solver itself; neither work identified the connection to the Jacobi eigenvalue algorithm.
Steinerberger conjectured that the rate of convergence of this procedure is $\kappa(A\tot)\approx\exp(-O(t/n^2))\kappa(A)$ based on numerical experiments and a heuristic. The present authors proved that $(-\log\det\rabs{A\tot})\le\exp(-O(t/n^2))(-\log\det\abs A)$.

\subsection{Block variants}
Both Gram-Schmidt and Jacobi admit block variants, which can be abstractly described as follows: rather than pick a \textit{pair} of indices $(i,j)$ at each step and updating columns $i$ and $j$ of $A\tot$, one can instead pick any subset of indices $J\subset[n]$ and update all the corresponding columns. We call $J$ the pivot set. In the non-block case, pivoting on $(i,j)$ ensures that
\[\inr{a\totp1_i}{a\totp1_j}=0.\]
In the block variant, pivoting on $J$ ensures that
\[\inr{a\totp1_i}{a\totp1_j}=0\quad\forall i,j\in J,i\neq j.\]
These methods are often motivated by considering the practical performance of these algorithms on modern hardware. For example, one may be able to take advantage of large cache sizes to perform one iteration for $\abs J>2$ very quickly.

The block Jacobi eigenvalue algorithm of Drma\v c \cite{b36} is one such method. It fixes a partition $I_1\sqcup\cdots\sqcup I_m\subset[n]$ and divides the given matrix $B$ into blocks corresponding to those index sets. Then, the pivot set $J$ is selected to be $I_{\ell}\cup I_{\ell'}$, where the pair $(\ell,\ell')$ can be selected analogously to how $(i,j)$ were selected in the non-block version. Both \cite{b37} and \cite{b38} show convergence of block Jacobi under various pivoting strategies.
There are many block versions of Gram-Schmidt along similar lines, see \cite{b39} for a recent survey.

\subsection{A generalized factorization}\label{a6}

In describing the connection between Gram-Schmidt and the one-sided Jacobi eigenvalue algorithm, we've already hinted at what the generalized algorithm is. Once a pivot set $J$ has been selected, we may abstract away the process by which the corresponding columns are updated: all we assume is that it is an invertible column operation which results in orthogonal columns. We will use the letter $S$ for the $\abs J\times \abs J$ matrix performing this column operation. The Jacobi eigenvalue algorithm corresponds to the special case where $S$ is unitary, and Gram-Schmidt to the case where $S$ is upper triangular.
The general pseudo-code for this and the two-sided version is below. We first clarify some notation. $\text{GL}_k(\C)$ is the set of all invertible $k\times k$ matrices over $\C$. $\binom{[n]}k$ is the set of all subsets of $[n]$ of size $k$. For an index set $J=\set{i_1,\ldots,i_k}\in\binom{[n]}k,i_1<\cdots<i_k$, denote the submatrices
\[A_J=\bmat{
a_{1,i_1} &\cdots& a_{1,i_k}\\
\vdots&&\vdots\\
a_{d,i_1} &\cdots& a_{d,i_k}\\
}\in\C^{d\times k}
\qand
B_{JJ}=\bmat{b_{i_1i_1} &\cdots& b_{i_1i_k} \\ \vdots && \vdots \\ b_{i_ki_1} &\cdots& b_{i_ki_k}}\in\C^{k\times k}.\]
For a permutation $\sigma$ on $[n]$ let $P_\sigma$ be the associated permutation matrix, i.e. $P_\sigma e_{j}=e_{\sigma(j)}$.
\begin{algorithm}[H]
\caption{
This is pseudo-code for the one-sided (left) and two-sided (right) versions of the generalized factorization algorithm. %
}\label{a19}
  \begin{multicols}{2}
\begin{algorithmic}[1]
\REQUIRE $A\in\C^{d\times n}$ has full column rank, $k\ge2$.
\ENSURE $A=Q\tacc$
\STATE $A^{(0)}\gets A$. 
\STATE $\tacc^{(0)}\gets I$.
\STATE $t\gets 0$
\WHILE{$A\tots A\tot$ is not close to diagonal}
\STATE Select a pivot $J=\set{i_1,\ldots,i_k},i_1<\cdots<i_k$.
\STATE Construct $S\in\text{GL}_k(\C)$ such that $A\tot_JS$ has orthogonal columns.
\STATE Define $T\in\C^{n\times n}$ as \(T=P_\sigma^*\bmat{S^{-1}\\&I}P_\sigma\) where $\sigma$ is a permutation such that $\sigma(i_j)=j$.
\STATE $A\totp1\gets A\tot T^{-1}$
\STATE $\tacc\totp1\gets\tacc\tot T$
\STATE $t\gets t+1$
\ENDWHILE
\RETURN $Q\gets A\tot$, $\tacc\gets \tacc\tot$
\end{algorithmic}
\columnbreak
\begin{algorithmic}[1]
\REQUIRE $B\in\C^{n\times n}$ is positive definite, $k\ge2$.
\ENSURE $B=\tacc^*D\tacc$
\STATE $B^{(0)}\gets B$.
\STATE $\tacc^{(0)}\gets I$.
\STATE $t\gets 0$
\WHILE{$B\tot$ is not close to diagonal}
\STATE Select a pivot $J=\set{i_1,\ldots,i_k},i_1<\cdots<i_k$.
\STATE Construct $S\in\text{GL}_k(\C)$ such that $S^*B\tot_JS$ is diagonal.
\STATE Define $T\in\C^{n\times n}$ as \(T=P_\sigma^*\bmat{S^{-1}\\&I}P_\sigma\) where $\sigma$ is a permutation such that $\sigma(i_j)=j$.
\STATE $B\totp1\gets(T^{-1})^*B\tot T^{-1}$
\STATE $\tacc\totp1\gets\tacc\tot T$
\STATE $t\gets t+1$
\ENDWHILE
\RETURN $D\gets B\tot$, $\tacc\gets\tacc\tot$
\end{algorithmic}
\end{multicols}
\end{algorithm}
\noindent We present both versions of the algorithms side-by-side to highlight their mathematical equivalence: one can start with $B=A^*A$ and maintain $A\tots A\tot=B\tot$ by selecting the same $T$ in line 7 of either algorithm. Because of this correspondence, we will refer to various lines of the algorithm without needing specify which version we are talking about. We also highlight that $T$ need not be formed and inverted explicitly.

There are many pivot rules one can use for the selection in line 5. This paper uses a randomized pivoting rule where $J$ is sampled uniformly at random from $\binom{[n]}k$. We refer to this as ``randomized size $k$ pivoting''.

We obtain a unified analysis of all special cases of \Cref{a19} with randomized size $k$ pivoting. The mechanism by which $S$ is selected in line 6 and the additional structure one wishes to impose upon $S$ do not affect our analysis in any way---in expectation, the rate of convergence depends only on $k$ and $n$. For concreteness, \autoref{a20} lists several decompositions one can obtain by imposing different constraints on $S$ and $(A_J\tot S)^*(A_J\tot S)=S^*B_{JJ}\tot S$.

\begin{table}[H]\centering 
\begin{tabular}{|l|c|c|}\hline
Decomposition name                & $S$                   & $(A_J\tot S)^*(A_J\tot S)=S^*B_{JJ}\tot S$ \\\hline
Eigendecomposition / SVD              & Unitary               & Diagonal \\\hline
Cholesky / QR            & Upper triangular      & Identity \\\hline
LDL / unnamed                    & Unit upper triangular & Diagonal \\\hline
unnamed / QL                     & Lower triangular      & Identity \\\hline
Gram matrix / Orthogonalization & Nonsingular           & Identity \\\hline
\end{tabular}
\caption{Two-sided and one-sided decomposition of the form $B=T^*DT$ and $A=QT$ that one obtains by different selections of $S$ in line 6 of \Cref{a19}.}
\label{a20}
\end{table}

\begin{remark}[Using recursion to compute $S$]
Let's revisit line 6 of either algorithm. Computing $S$ is itself a matrix decomposition problem. In fact, the decomposition of $B_{JJ}$ or $A_J$ one must find is exactly the kind of decomposition one seeks to compute of $B$ or $A$. This points to the natural choice of using the algorithm recursively. Alternatively, one may use any expensive factorization algorithm since the problem instance is smaller.
\end{remark}

\begin{remark}[Parallelizability]
When two pivot sets are disjoint, the corresponding updates may occur in parallel. The results in this paper can be extended to the case where multiple disjoint pivots $J_1,\ldots,J_\ell\in\binom{[n]}k$ are sampled at a time where the marginal distribution of each $J_j$ is uniform, and then the corresponding pivots are performed in any order.
\end{remark}
\begin{remark}[Fixed pivot size]
    There's no reason to keep the pivot size $\abs J=k$ constant, other than it simplifies the analysis. $J$ may be sampled from any set such that the probability $\set{i,j}\subset J$ is equal for all distinct pairs $i,j$.
\end{remark}
\begin{remark}[Meaning of ``convergence'']
    The algorithm halts when $A\tots A\tot=B\tot$ is close to diagonal, in whatever sense the user demands. When we say that the method converges, we mean that the criteria that ends the while-loop in the algorithm is met. Other sources may impose a stronger condition on convergence: it means that the version of the algorithm which continues iterating indefinitely, never exiting the while-loop, must produce states such that the limits $\lim_{t\to\infty}A\tot$ (or $\lim_{t\to\infty}B\tot$) and $\lim_{t\to\infty}\tacc\tot$ exist. In particular, if $A(t)$ has reached an orthonormal basis, but that basis changes each iteration, we will still say it has converged.
\end{remark}

\section{Main tool for analysis}\label{a7}
In \Cref{a21} we review the standard Jacobi analysis, from which we draw inspiration. This analysis rests on a certain potential function, used to measure progress of convergence to a diagonal matrix. In \Cref{a22} we introduce our potential function, $\Gamma(\cdot)$, and relate $\Gamma(\cdot)$ to several other metrics.

\subsection{Standard Jacobi analysis}\label{a21}
Our analysis of \Cref{a19} follows a similar strategy as the classical analysis of the Jacobi eigenvalue algorithm. To highlight the connection, we review the classical analysis here.
Say that $B=B^{(0)},B^{(1)},B^{(2)},\ldots$ is the sequence of matrices produced by the iterations of the Jacobi eigenvalue algorithm. The proof strategy is to identify a potential function that behaves monotonically, i.e. some quantity which strictly decreases as iterations are performed. In the classical analysis, this potential is taken to be the squared distance of the current iterate $B\tot$ to the closest diagonal matrix in Frobenius norm, denoted $\off(\cdot)$. This quantity can be compactly expressed as
\[
\off(B)
:=\inf_{D\text{ diagonal}}\magn{B-D}_F^2
=\magn{B-D_B}_F^2
=\sum_{i,j\in[n],i\neq j}\abs{b_{ij}}^2
\]
where $D_B=\diag\pare{b_{11},\ldots,b_{nn}}$.
A straightforward calculation reveals that that\eq{\label{a23}\off(B\totp1)=\off(B\tot)-2\rabs{b\tot_{ij}}^2}
where $(i,j)$ is the pivot selected at time $t$. If $(i,j)$ is picked according to the greedy pivot rule (i.e. $\rabs{b\tot_{ij}}^2=\argmax_{i,j:i\neq j}\rabs{b\tot_{ij}}^2$), then $\rabs{b\tot_{ij}}^2$ will certainly be greater than the average so one has
\[\rabs{b\tot_{ij}}^2\ge\frac1{n(n-1)}\off(B\tot)\]
which ensures global convergence of $\off(B\tot)\to0$ at a linear rate. If instead one uses a randomized size 2 pivoting strategy, one has equality in expected value,
\eq{\label{a24}\E\off(B\totp1)=\twofactor\off(B\tot),}
which again ensures global convergence in expectation at a linear rate. Although both cases reduce to reasoning about the expectation, it does not seem that a random pivoting strategy has been considered in the literature. 

In the more general setting where $B\tot$ is the sequence produced by non-unitary transformations, we no longer have \eqref{a23} (and therefore not \eqref{a24} either).  Instead, we obtain an expression similar to \eqref{a24} but with a different function in place of $\off(\cdot)$.

\subsection{A new potential function}\label{a22}
Our potential function admits a few equivalent expressions. Letting $\hat B = D_B^{-1/2}BD_B^{-1/2}$, i.e., the diagonal-normalization of $B$, we define $\Gamma(B)$ as
\eq{\label{a25}
\Gamma(B)
:=\tr(B\odot B^{-1}-I)
=\tr(\hat B^{-1})-n
=-\sum_{i,j\in[n],i\neq j}b_{ij}\cdot(B^{-1})_{ji}.}
As with $\off(\cdot)$, it is clear from the definition that $\Gamma(B)=0$ when $B$ is diagonal. Less obvious, now, is the converse statement and quantitative versions.
Notably, the converse fails among indefinite or general matrices\footnote{Counter examples include
$\Gamma\pare{\bmat{1&2&1\\2&1&1\\1&1&1}}=0$,
$\Gamma\pare{\bmat{0&1\\0&0}}=0$.}.
Fortunately, among positive definite matrices, $\Gamma(B)$ is proportional to the \textit{Jensen gap} of a randomly sampled eigenvalue of $\hat B$ for the function $\lambda\mapsto\lambda^{-1}$:
\eq{\label{a26}
\frac1n\Gamma(B)=\E(\lambda^{-1})-\pare{\E(\lambda)}^{-1}=\frac1n\tr\pare{\hat B^{-1}}-\pare{\frac1n\tr(\hat B)}^{-1}.}
The expressions \cref{a25}, \cref{a26} are equivalent since $\hat B$ has all 1s along the diagonal, so $\tr(\hat B)=n$. Jensen's inequality directly implies $\Gamma(B)\ge0$, with equality if and only if all the eigenvalues of $\hat B$ are equal, i.e. $\hat B=I$, which is equivalent to $B=D_B$ being diagonal.
Unfortunately, the statement ``$\Gamma(B)=0$ implies $B$ is diagonal'' is not enough to conclude that if for a sequence $B\tot$ with $\Gamma(B\tot)\to0$ that $\off(B\tot)\to0$. Consider, for example,
\[B\tot=\bmat{t^2 & t \\ t & t^2},\quad\hat B\tot=\bmat{1&t^{-1}\\t^{-1}&1}.\]
In this example, $\Gamma(B\tot)=2(t^2-1)^{-1}\to0$ and yet
$\off(B\tot)=2t^2\to\infty$.
This example is capturing the difference between becoming closer to diagonal in an absolute versus relative sense. Indeed $B\tot$ is moving further from the set of diagonal matrices in an absolute sense, but in a relative sense we should instead consider $\off(\hat B\tot)=2t^{-2}\to0$. It is this notion of distance-to-diagonal that is captured by $\Gamma(\cdot)$. Specifically, as shown in \Cref{a27}, $\Gamma(B\tot)\to0$ implies $\off(\hat B\tot)\to0$ at the same rate. We justify considering only this relative sense of distance-to-diagonal by noting that it is used as a standard stopping criterion for the Jacobi eigenvalue algorithm \cite{b0}; therefore, convergence of $\Gamma(B\tot)\to0$ is enough to show convergence of \Cref{a19} for stopping criteria that are satisfied when $\off(\hat B)$ is reduced below some threshold.

This relationship between $\Gamma(\cdot)$ and $\off(\cdot)$ is intuitively explained by a general fact that the Jensen gap is larger for distributions that are ``spread out'' and smaller for distributions that are highly concentrated. In particular, $\off(\hat B)$ is directly proportional to the variance of a randomly selected eigenvalue:
\[\E(\lambda^2)-\E(\lambda)^2=\frac1n\tr(\hat B^2)-1=\frac1n\rmagn{\hat B-I}_F^2=\frac1n\off(\hat B).\]
This same mechanism can be used to control the ratio between the largest and smallest eigenvalues of $\hat B$: if this ratio is close to 1, the eigenvalues are clustered, and if it is large, they are spread out. This ratio is simply the condition number $\kappa(\hat B)$. This quantity plays an important role in the numerical stability analysis of the Jacobi eigenvalue algorithm appearing in \cite{b0}; it is known as the diagonal-normalized condition number, which we denote as $\hat\kappa(B):=\kappa(\hat B)$.
This condition number can be arbitrarily smaller than the usual condition number $\kappa(\cdot)$ (consider, for example, any diagonal matrix), but cannot be much larger (specifically, as shown in \cite{b40}, we have \(\hat\kappa(B)\le n\kappa(B).\)).
The relationship between $\Gamma(\cdot)$ and $\hat\kappa(\cdot)$ presented in \Cref{a28} is what allows us to prove polynomial control over the numerical stability of \Cref{a19}.

These relationships between $\Gamma(B)$ and $\kappa(\hat B)$ and $\off(\hat B)$ are derived by considering an optimization program of the eigenvalues $\lambda_j$ of $\hat B$. The constraints come from the invariant $\tr(\hat B)=n$ and the definition $\Gamma(B)=\Gamma(\hat B)=\tr(\hat B^{-1})-n$. Converting these to statements about the eigenvalues themselves results in the constraints
\spliteq{\label{a29}}{
\lambda_1,\ldots,\lambda_n>0,\quad
\lambda_1+\cdots+\lambda_n=n,\quad\lambda_1^{-1}+\cdots+\lambda_n^{-1}=n+\Gamma(\hat B).}
Strictly speaking, one may also wish to impose the ordering of the eigenvalues, $\lambda_1\ge\cdots\ge\lambda_n$. Due to the symmetry of the other constraints, this turns out not to be important.
Observe that the constraint $1/\lambda_1+\cdots+1/\lambda_n=n+\Gamma(\hat B)$ implies all feasible points avoid the boundary, $\lambda_j=0$ for some $j$. The set of feasible points is also compact, so the maximizing point for any  continuous objective function will be achieved for a point in the interior of the positive orthant.
This allows us to find optima of smooth functions by the method of Lagrange multipliers. This method uses the following fact: if left-hand sides of the equalities in \eqref{a29} are $F_1(\lambda_1,\ldots,\lambda_n)=\lambda_1+\ldots+\lambda_n$ and $F_2(\lambda_1,\ldots,\lambda_n)=\lambda_1^{-1}+\cdots+\lambda_n^{-1}$, and the objective is $\Phi$, then the optimal assignment satisfies
\eq{\label{a30}
\rank\pare{\bmat{\nabla\Phi(\lambda_1,\ldots,\lambda_n)\\\nabla F_1(\lambda_1,\ldots,\lambda_n)\\\nabla F_2(\lambda_1,\ldots,\lambda_n)}}
=
\rank\pare{\bmat{&\nabla\Phi(\lambda_1,\ldots,\lambda_n)&\\1&\cdots&1\\-1/\lambda_1^2&\cdots&-1/\lambda_n^2}}
\le 2.}
In particular, the determinant of every $3\times 3$ submatrix must vanish.

\begin{lemma}\label{a27}
    \[
    \off(\hat B)
\le\Gamma(B)\cdot\pare{\sqrt{1+{\Gamma(B)}/4}+\sqrt{\Gamma(B)/4}}^2\]
\end{lemma}
\begin{proof}
Put $g=\Gamma(\hat B)=\Gamma(B)$.
We have
\eq{\label{a31}\rmagn{\hat B-I}_F^2=-n+\rmagn{\hat B}_F^2=-n+\lambda_1^2+\cdots+\lambda_n^2.}
We consider maximizing \cref{a31} subject to \eqref{a29}.
This is a smooth objective so by \eqref{a30},
\eq{
\rank\pare{\bmat{
2\lambda_1&\cdots&2\lambda_n\\
1&\cdots&1\\
-1/\lambda_1^2&\cdots&-1/\lambda_n^2
}}\le2
}
By considering a vector $\bmat{1&a&b}$ in the left-nullspace, we see that each eigenvalue must be the root of a common cubic polynomial $z^3+az^2-b=0$.
By Descartes' rule of signs, this polynomial can have at most two positive roots.
Without loss of generality, say 
\[\lambda_1=\cdots=\lambda_k\ge\lambda_{k+1}=\cdots=\lambda_n\]
for some index $k$. Plugging these into these constraints gives
\[
k\lambda_1+(n-k)\lambda_n=n,\quad
\frac k{\lambda_1}+\frac{n-k}{\lambda_n}=n+g.\]
Substituting for $\lambda_n$ and clearing the denominators shows that $\lambda_1$ must be the larger root of
\[z^{2}-\frac{n\left(2k+g\right)}{k\left(n+g\right)}z+\frac{n}{n+g}=0.\]
By applying the quadratic formula, we see that the objective
\[k\lambda_1^2+(n-k)\lambda_n^2\]
is monotonically increasing in $k$, so we should take $k=n-1$. By rearranging the expression obtained from the quadratic formula, this results in
\[\lambda_1=\cdots=\lambda_{n-1}=
\frac{1+\frac{g}{2\left(n-1\right)}-\sqrt{\frac{g}{\left(n-1\right)n}+\frac{g^{2}}{4\left(n-1\right)^{2}}}}{1+{g}/{n}}\qand\lambda_n=n-(n-1)\lambda_1.\]
Then the maximum of the objective function can be expressed as
\spliteq{}{
-n+(n-1)\lambda_1^2+\lambda_n^2
  &=-n+(n-1)\lambda_1^2+\pare{n-(n-1)\lambda_1}^2
\\&=n(n-1)(\lambda_1-1)^2.
}
The resulting expression is monotonically increasing in $n$, so one may upper bound this quantity by taking the limit as $n\to\infty$. This yields
\[\text{off}(\hat B)\le\pare{g/2+\sqrt{g+{g^2}/4}}^2.\]
\end{proof}

\begin{lemma}\label{a28}
\[
1+\Gamma(B)/n
\le\kappa(\hat B)
\le(1+\sqrt{\Gamma(B)/2}+\Gamma(B)/2)^2.\]
\end{lemma}
\begin{proof}
For the lower bound on $\kappa(\hat B)$, note that
\[n\Gamma(\hat B)=n\tr(\hat B^{-1})-n^2=\tr(\hat B)\cdot\tr(\hat B^{-1})-n^2\le n\rmagn{\hat B}\cdot n\rmagn{\hat B^{-1}}-n^2=n^2\kappa(\hat B)-n^2.\]
For the upper bound, put $g=\Gamma(B)=\Gamma(\hat B)$. We have
\eq{\label{a32}\kappa(\hat B)=\max(\lambda_1,\ldots,\lambda_n)/\min(\lambda_1,\ldots,\lambda_n).}
We consider maximizing \cref{a32} subject to \eqref{a29}.
Since the constraints are all symmetric in the variables, we may permute the assignment to the variables which maximizes $\kappa(\hat B)$ so that $\lambda_1$ and $\lambda_n$ are the largest and smallest variables respectively. In particular, this means we may simply maximize the smooth objective
\(\lambda_1/\lambda_n.\)
By \eqref{a30},
\eq{\label{a33}
\rank\pare{\bmat{
1/{\lambda_n}&0&\cdots&0&-\lambda_1/\lambda_n^2\\
1&1&\cdots&1&1\\
-1/\lambda_1^2&-1/\lambda_2^2&\cdots&-1/\lambda_{n-1}^2&-1/\lambda_n^2
}}
\le2.}
The determinant of each $3\times 3$ submatrix must be 0. Take the columns $i,j,n$ for $1<i,j<n$. This determinant is
\[
\frac{\lambda_1}{\lambda_n^2}\pare{\frac1{\lambda_j^2}-\frac1{\lambda_i^2}}=0\]
which implies $\lambda_i=\lambda_j$. The rank condition \cref{a33} thus simplifies dramatically since the middle $n-2$ columns coincide,
\[
\det\pare{\bmat{
1/{\lambda_n}&0&-\lambda_1/\lambda_n^2\\
1&1&1\\
-1/\lambda_1^2&-1/\lambda_2^2&-1/\lambda_n^2
}}
=0.
\]
Many terms cancel in this determinant, and the constraint simplifies to just
\(\lambda_2^2=\lambda_1\lambda_n\).
Divide the constraint $\lambda_1+(n-2)\lambda_2+\lambda_n=n$ by $\lambda_2$ and multiply the constraint $\frac1{\lambda_1}+\frac{n-2}{\lambda_2}+\frac1{\lambda_n}=n+g$ by $\lambda_2$ to obtain
\[
\frac{\lambda_1}{\lambda_2}+(n-2)+\frac{\lambda_n}{\lambda_2}=\frac n{\lambda_2},\quad
\frac{\lambda_2}{\lambda_1}+(n-2)+\frac{\lambda_2}{\lambda_n}=\lambda_2(n+g).\]
The lefthand sides of each of these constraints are both equal to $\sqrt{\lambda_1/\lambda_n}+\sqrt{\lambda_n/\lambda_1}+(n-2)$ since $\lambda_2=\sqrt{\lambda_1\lambda_n}$. Consequently,
\[\lambda_2=\cdots=\lambda_{n-1}=\sqrt{\frac n{n+g}}.\]
Observe that we have deduced the sum $\lambda_1+\lambda_n=n-(n-2)\lambda_2$ and the product $\lambda_1\lambda_n=\lambda_2^2$ so by Viete's formulas $\lambda_1$ and $\lambda_n$ are the larger and smaller roots of
\[z^2-\pare{n-(n-2)\lambda_2}z+\lambda_2^2=0\]
respectively. By applying the quadratic formula and rearranging, we obtain
\[\lambda_1/\lambda_n=1+\frac12\pare{x(x+4)+(x+2)\sqrt{x(x+4)}}\qwhere x=\frac g{\sqrt{1+g/n}+1}.\]
Notice that the expression for $\lambda_1/\lambda_n$ is increasing in $x$, and that $x\le g/2$. Plugging this in gives,
\[\lambda_1/\lambda_n\le
1+\sqrt{2g+g^2(g^{2}+16g+80)/64}+g+{g^{2}}/{8}\]
By treating this as a function of $\sqrt g$ and applying Taylor's theorem, this gives
\spliteq{}{
\kappa(\hat B)
  &\le1+\sqrt{2g}+g+\frac{5}{8\sqrt{2}}g^{1.5}+\frac{g^{2}}{4}
\\&\le(1+\sqrt{g/2}+g/2)^2
}
as required.
\end{proof}

\begin{lemma}\label{a34}
    \[\max\pare{\rmagn{\hat B},\rmagn{\hat B^{-1}}}\le1+\frac{\Gamma(B)}2+\sqrt{\pare{1+\frac{\Gamma(B)}2}^2-1}.\]
\end{lemma}
\begin{proof}
Put $g=\Gamma(\hat B)$. Here we are maximizing either $\lambda_1$ or $1/\lambda_n$ subject to \eqref{a29}. Since the constraints are symmetric, we may equivalently think of the maximum values of $\lambda_1$ and $1/\lambda_1$ instead, which correspond to just the maximum and minimum values of $\lambda_1$. Then by
\eqref{a29}, we have
\[
\det\bmat{1&0&0\\1&1&1\\-1/\lambda_1^2&-1/\lambda_i^2&-1/\lambda_j^2}=0,
\]
i.e. $\lambda_i=\lambda_j$ for all $i,j>1$. By plugging this into the constraints and solving for $\lambda_1$, we see that the extremal values of $\lambda_1$ are the roots of
\[(1+g/n)z^2-(2+g)z+1=0.\]
Notice that the larger root is increasing in $n$ and the smaller root is decreasing in $n$, so we obtain correct bounds by taking the limit as $n$ goes to infinity. The consequence is that the extremal values of $\lambda_1$ lie between the roots of $z^2-(2+g)z+1$. Since the product of the roots is 1, the reciprocal of the smaller root is exactly the larger root, so we finally apply the quadratic formula to obtain
\[
\max(\lambda_1,1/\lambda_1)
\le
1+\frac{\Gamma(B)}2-\sqrt{\pare{1+\frac{\Gamma(B)}2}^2-1}\]
as desired.
\end{proof}

\section{Exact arithmetic}\label{a8}
\label{a35}
In this section we prove $\Gamma(\cdot)$ converges to zero as \Cref{a19} progresses, first by analyzing the change in $\Gamma(\cdot)$ during a single step of the algorithm and then iterating. As a corollary, we show that one- and two-sided versions of \Cref{a19} quickly converge to the factorizations $A=Q\tacc$ and $B=\tacc^*D\tacc$ where $Q$ is nearly orthogonal and $D$ is nearly diagonal. See \Cref{a0} for a precise statement on this convergence. 

\subsection{One-step analysis}
As promised, we derive an analogous formula to \Cref{a23} for how for $\Gamma(\cdot)$ changes upon a single iteration of \Cref{a19}. Interestingly, although we will only apply this formula when $B$ is positive semi-definite, it holds for general matrices and so we state it as such.
\begin{lemma}[Deterministic update formula]
    \label{a36}
    Fix any $B\in\C^{n\times n}$. Let $J\subset[n]$ be any subset of indices and $S\in\C^{\abs J\times\abs J}$ be any nonsingular matrix such that $S^*B_{J,J}S$ is diagonal.
    Denote
    \(T=P_\sigma\bmat{S^{-1}\\&I}P_\sigma^*\) where $\sigma(J)=[k]$.
    Then
    \[\Gamma((T^{-1})^*BT^{-1})=\Gamma(B)+\sum_{i,j\in J,i\neq j}b_{ij}\cdot(B^{-1})_{ji}.\]
\end{lemma}
\begin{proof}
Put $P=P_\sigma$. Note that $\Gamma(\cdot)$ is invariant under conjugation by permutation matrices, so
\[\Gamma(T^{-*}BT^{-1})=\Gamma((P^*T^{-*}P)(P^*BP)(P^*T^{-1}P)).\]
We can express $P^*BP$ and $P^*TP$ as block matrices
\[
P^*BP=\bmat{B_{JJ}&B_{J,J^c}\\B_{J^c,J}&B_{J^c,J^c}}
\qand
P^*TP=\bmat{S^{-1}\\&I}.
\]
For the remainder of the proof, we replace $P^*BP$ with $B$ and $P^*TP$ with $T$. $\odot$ has lower precedence than regular matrix multiplication, so $M_1M_2\odot M_3M_4=(M_1M_2)\odot(M_3M_4)$.
Then
\spliteq{}{
\Gamma(T^{-*}BT^{-1})-\Gamma(B)
=&\tr\pare{T^{-*}BT^{-1}\odot TB^{-1}T^*}-\tr\pare{B\odot B^{-1}}
\\=&\tr\pare{\bmat{ S^*\\&I}B\bmat{ S\\&I}\odot\bmat{ S^{-1}\\&I}B^{-1}\bmat{ S^{-*}\\&I}}-\tr\pare{B\odot B^{-1}}
\\=&\tr\pare{ S^*B_{JJ} S\odot  S^{-1}(B^{-1})_{JJ} S^{-*}}-\tr(B_{JJ}\odot(B^{-1})_{JJ})
}
By assumption, $ S^*B_{J,J} S=D$ is diagonal. In particular, we have
\[
\tr\pare{ S^*B_{JJ} S\odot  S^{-1}(B^{-1})_{JJ} S^{-*}}
=\tr\pare{S^*B_{JJ} S\cdot S^{-1}(B^{-1})_{JJ} S^{-*}}
=\tr\pare{B_{J,J}\cdot(B^{-1})_{J,J}}.\]
This gives
\spliteq{}{
\Gamma(T^*BT)-\Gamma(B)
  &=\tr\pare{B_{JJ}\cdot(B^{-1})_{JJ}}-\tr\pare{B_{JJ}\odot(B^{-1})_{JJ}}
\\&=
\sum_{i,j\in J}b_{ij}\cdot(B^{-1})_{ji}
-
\sum_{j\in J}b_{jj}\cdot(B^{-1})_{jj}
\\&=\sum_{i,j\in J,i\neq j}b_{ij}\cdot(B^{-1})_{ji}
}
as required. 
\end{proof} 

Completing the analogy of \Cref{a23} to \Cref{a24}, we now compute the expected value of the bound from \Cref{a36} under the randomized size $k$ pivot rule. 
\begin{lemma}[Expected update]\label{a37}
Fix any $B\in\C^{n\times n}$.
Let $T$ be as in \Cref{a36} for a uniformly randomly sampled size $k$ pivot set $J\in\binom{[n]}k$. Then
\[\E\Gamma((T^{-1})^*BT^{-1})=\factor\Gamma(B).\]
\end{lemma}
\begin{proof}
\[
\E\pare{\sum_{i,j\in J,i\neq j}b_{ij}\cdot(B^{-1})_{ji}}
=\binom nk^{-1}\sum_{J\in\binom{[n]}k}
\sum_{i,j\in J,i\neq j}b_{ij}\cdot(B^{-1})_{ji}.
\]
Note for each ordered pair $(i,j)$ that the term $b_{ij}\cdot(B^{-1})_{ji}$ appears exactly $\binom{n-2}{k-2}$ times. Thus
\spliteq{}{
\E\pare{\sum_{i,j\in J,i\neq j}b_{ij}\cdot(B^{-1})_{ji}}
&={\binom nk}^{-1}\binom{n-2}{k-2}\sum_{i,j\in [n],i\neq j}b_{ij}\cdot(B^{-1})_{ji}
\\&=-\frac{k(k-1)}{n(n-1)}\Gamma(B).}
\end{proof}

\subsection{Iterated analysis}
\begin{theorem}[Global linear convergence of \Cref{a19}]\label{a38} Let $A\tot,B\tot$ be the sequence of matrices produced by any instantiation of the the one- and two-sided versions \Cref{a19} respectively with randomized size $k$ pivoting. Then
\begin{align*}
    \E\Gamma(B\tot)&=\factor^t\Gamma(B^{(0)}), \\
    \E\Gamma(A\tots A\tot)&=\factor^t\Gamma(A^{(0)*}A^{(0)}).
\end{align*}
    
\end{theorem}
\begin{proof} \Cref{a37} implies
\eq{\label{a39}\E\pare{\Gamma(B\tot)\,|\,B\totm1}=\factor\Gamma(B\totm1).}
By using induction with the law of iterated expectation, we obtain
\spliteq{}{
  \E\Gamma(B\tot)
  &=\E\pare{\E\pare{\Gamma(B\tot)\,|\,B^{(t-1)}}}
\\&=\pare{1-\frac{k(k-1)}{n(n-1)}}\E\Gamma(B^{(t-1)})
\\&=\pare{1-\frac{k(k-1)}{n(n-1)}}^t\Gamma(B^{(0)}).}
\end{proof}

\begin{remark}[Monotonicity and determinism]
    Unlike $\off(\cdot)$, the potential function $\Gamma(\cdot)$ is \textit{not} strictly monotone, it is only monotone in expectation. If one seeks a deterministic pivoting strategy with monotone convergence, one could adopt a greedy strategy which is analogous to \eqref{a17}. Specifically, pick
    \[(i,j)=\argmin_{i,j:i\neq j}\Re\pare{b_{ij}(B^{-1})_{ji}}.\]
    With this pivoting rule, the bounds of \Cref{a38} would hold with an inequality $\le$ instead of equality in expectation.
    We speculate that this could be done efficiently in the setting where one is initially given both $B$ and $B^{-1}$, and updates both $B\tot$ and $(B\tot)^{-1}$ in each iteration.
\end{remark}

\begin{remark}[Kaczmarz-Kac walk]\label{a9}
Consider the special case of the one-sided \Cref{a19}, where $k=2$ and $S$ is either upper or lower triangular. This is precisely the procedure proposed in \cite{b26} (where it is dubbed the ``Kaczmarz-Kac walk'') and concurrently in \cite{b35}. With \Cref{a28} to relate $\Gamma(\cdot)$ to $\kappa(\cdot)$, Theorem \ref{a38} confirms the rate of convergence of this method conjectured by Steinerberger \cite{b26}.
\end{remark}

Our main result is a corollary of \Cref{a38}, showing the convergence of \Cref{a19}.
\begin{corollary}\label{a0}
Let $0<\delta < 1$. Given positive definite $B\in\C^{n\times n}$, run \Cref{a19} with randomized size $k$ pivoting for
\[t\ge \frac{n(n-1)}{k(k-1)}\log\pare{{4n\hat\kappa(B)}/{\delta^2}}\]
iterations. Let $\tacc=\tacc\tot$, $D=B\tot$ be the values returned by the algorithm. Let $D'=D\odot I$ be just the diagonal entries of $D$. Then
\[\magn{B-\tacc^*D'\tacc}\le\magn B\frac{\delta}{\rho-\delta}\]
with probability at least $1-\rho$.
Additionally, $B=\tacc^* D\tacc$ exactly where $D$ is close to diagonal in the sense that
\[\E\sqrt{\sum_{i\neq j}\frac{\abs{d_{ij}}^2}{d_{ii}d_{jj}} }\le\delta\] where $d_{ij}$ is the i$j^{th}$ entry of $D$.
Equivalently, if $B=A^*A$, then the one-sided version returns $Q=A\tot$ forming the exact factorization $A=Q\tacc$ where $Q$ is nearly orthogonal in the sense that
\[\E\sqrt{\sum_{i\neq j}\frac{\abs{\inr{q_i}{q_j}}^2}{\magn{q_i}^2\magn{q_j}^2} }\le\delta\]
where $q_i$ is the $i^{th}$ column of $Q$. Finally, it is possible to execute lines 6-8 so that $\tacc$ is ensured to be (special) unitary or (unit) upper/lower triangular, giving any of the factorizations listed in \autoref{a20}.
\end{corollary}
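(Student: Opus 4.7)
The plan hinges on a potential function $\Phi(B) := -\log\det\hat B = \log\prod_i b_{ii} - \log\det B$, which is nonnegative by Hadamard's inequality and vanishes iff $B$ is diagonal, and which satisfies $\Phi(B)\leq n\hat\kappa(B)$ at $t=0$ (via $\sum_i -\log\lambda_i(\hat B)\leq\sum_i(\lambda_i^{-1}-1)\leq n(\hat\kappa(B)-1)$). The first observation is that the single-step decrement of $\Phi$ depends on the pivot set $J$ but \emph{not} on the particular $S$ chosen in line 6. Combining $\det B\totp1 = |\det S|^2\det B\tot$ (from the block form of $T$ in line 7) with the fact that $B\totp1_{JJ} = S^* B\tot_{JJ} S$ is forced to be diagonal yields
\[
\Phi(B\tot)-\Phi(B\totp1)\;=\;-\log\det\hat B\tot_{JJ},
\]
valid for any admissible $S$. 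This $S$-independence is exactly what allows a single rate of convergence to serve every row of \autoref{a19}.

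The second and central step is to prove that, when $J$ is drawn uniformly from $\binom{[n]}{k}$,
\[
\E_J\pare{-\log\det\hat B_{JJ}}\;\geq\;\frac{k(k-1)}{n(n-1)}\,\Phi(B),\qquad(\star)
\]
so that $\E[\Phi(B\totp1)\mid B\tot]\leq\factor\Phi(B\tot)$. Iterating this bound and invoking $\Phi(B)\leq n\hat\kappa(B)$ yields $\E\Phi(B\tot)\leq e^{-tk(k-1)/(n(n-1))}\,n\hat\kappa(B)\leq\delta^2/4$ for the stated $t$. A natural route to $(\star)$ is to reduce to the case $k=2$ via a telescoping identity over nested pivot subsets and then invoke the classical correlation-matrix inequality $-\log\det\hat B\leq\sum_{i<j}-\log(1-|\rho_{ij}|^2)$ (where $\rho_{ij}=b_{ij}/\sqrt{b_{ii}b_{jj}}$), itself a consequence of iterated $2\times 2$ Fischer-type inequalities. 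Establishing $(\star)$ with the sharp constant $\frac{k(k-1)}{n(n-1)}$ uniformly in $S$ for general $k\geq 2$ is the main technical obstacle.

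Given the expectation bound on $\Phi(D)$, the three error statements in the corollary follow by standard conversions. The normalized off-diagonal quantity is exactly $\|\hat D-I\|_F^2=\sum_{i\neq j}\abs{d_{ij}}^2/(d_{ii}d_{jj})$, and a second-order expansion of $-\log\det$ about $I$ gives $\|\hat D-I\|_F^2\leq 2\Phi(D)(1+o(1))$ once $\Phi(D)$ is sufficiently small; Jensen's inequality on the concave square root then converts $\E\Phi(D)\leq\delta^2/4$ into the claimed $\E\sqrt{\,\cdots\,}\leq\delta$. The high-probability bound on $\|B-\tacc^*D'\tacc\|=\|\tacc^*(D-D')\tacc\|$ follows from Markov applied to the same expectation (giving a pointwise bound on $\|\hat D-I\|$ with probability $\geq 1-1/n$), combined with $\|\tacc^*(D-D')\tacc\|\leq\|B\|\cdot\|\hat D-I\|/(1-\|\hat D-I\|)$, obtained by writing $D-D'=D_D^{1/2}(\hat D-I)D_D^{1/2}$ and using $B=\tacc^*D\tacc$. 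Finally, the structural assertion on $\tacc$ is immediate: each per-iteration $T$ built in line 7 inherits the structure of the chosen $S$ (unitary, upper/unit-upper/lower triangular, etc.), and products of such matrices preserve that structure, realizing every row of \autoref{a19}.
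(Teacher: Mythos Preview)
Your single-step decrement formula $\Phi(B\tot)-\Phi(B\totp1)=-\log\det\hat B\tot_{JJ}$ and its $S$-independence are correct, and the downstream conversions (Jensen, Markov, the structural closure of $\tacc$) mirror the paper's argument. The fatal gap is $(\star)$: it is simply false. Take $n=3$, $k=2$, and the correlation matrix $\hat B$ with every off-diagonal entry equal to $-0.4$. Then $\Phi(B)=-\log(0.2\cdot1.4^2)\approx0.937$, while every $2\times2$ principal minor has determinant $0.84$, so $\E_J[-\log\det\hat B_{JJ}]\approx0.174<\tfrac{2}{6}\,\Phi(B)\approx0.312$. Equivalently, the ``classical'' inequality you invoke, $\det\hat B\geq\prod_{i<j}(1-|\rho_{ij}|^2)$, fails here ($0.392<0.84^3\approx0.593$); iterated Fischer-type bounds actually point in the \emph{opposite} direction. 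The log-determinant potential does decrease in expectation, but not at the uniform rate $\factor$, so the iteration count in the corollary cannot be recovered from it.

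The paper sidesteps this by using a different potential, $\Gamma(B)=\tr(B\odot B^{-1})-n$. Its decisive feature is that the single-step change is an \emph{exact identity}, $\Gamma(B\totp1)-\Gamma(B\tot)=\sum_{i,j\in J,\,i\neq j}b_{ij}(B^{-1})_{ji}$ (Lemma~\ref{a25}), again $S$-independent. Averaging over uniform $J\in\binom{[n]}{k}$ gives $\E[\Gamma(B\totp1)\mid B\tot]=\factor\Gamma(B\tot)$ with \emph{equality}, so no combinatorial inequality of the $(\star)$ type is needed. The conversions from $\Gamma$ to $\hat\kappa$ and to $\|\hat D-I\|_F$ (Lemmas~\ref{a29} and~\ref{a30}) then play the role your second-order expansion of $-\log\det$ would have played, and the remainder of the proof proceeds essentially as you outlined.
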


\begin{proof}%
First note that as outlined in \Cref{a20}, we can apply constraints to $S$ to produce a specific factorization. Note that each listed property of $S$ implies the corresponding property of $T$ in line 7. That is, if $S$ is unitary, $T$ will be unitary, if $S$ is upper triangular, $T$ will be upper triangular, etc. These properties also are closed under multiplication, so the returned transformation $\tacc$ will have the corresponding property as well.
Picking $S$ with the desired constraint is also always possible: these are just the corresponding decompositions of $B_{JJ}$ and $A_J$, which always exist.

We now turn to the quantitative statements in the two-sided case.
\Cref{a37} implies
\[
\E\pare{\Gamma(B\tot)\,|\, B\totm1} = 
\factor \Gamma(B\totm1)\]
By applying the same equality for smaller values of $t$ and using the law of iterated expectation, we have
\spliteq{}{
  \E\Gamma(B\tot)
&=\E\pare{\E\pare{\Gamma(B\tot)\,|\,B^{(t-1)}}}
\\&=\pare{1-\frac{k(k-1)}{n(n-1)}}\E\Gamma(B^{(t-1)})
\\&=\pare{1-\frac{k(k-1)}{n(n-1)}}^t\Gamma(B^{(0)}).}
By \Cref{a38}, our choice of $t$, and \Cref{a28} we have \[\E\Gamma(B\tot)=\factor^t\Gamma(B^{(0)}) \leq \delta^2/4.\] By \Cref{a27} and Jensen's inequality, we have \[\E\sqrt{\sum_{i\neq j}\frac{\abs{d_{ij}}^2}{d_{ii}d_{jj}} } \leq \delta. \] The statements for the one-sided case follow identically.
The first statement in the corollary follows by considering the event
\[
\sqrt{\sum_{i\neq j}\frac{\abs{d_{ij}}^2}{d_{ii}d_{jj}} } \leq n\delta,\]
which occurs with probability $1-\rho$ by Markov's inequality. This is equivalent to
\[\magn{  D'^{-1/2}\tacc^{-*}B\tacc^{-1}D'^{-1/2} - I}_F  \le\delta/\rho\]
which by the triangle inequality and submultiplicativity implies
\spliteq{}{
\magn{B - \tacc^* D'\tacc}_F 
&\le\rmagn{D'^{1/2}\tacc}^2\delta/\rho
\\&=\rmagn{\tacc^* D'\tacc }\delta/\rho
\\&\le(\magn B+\rmagn{B-\tacc^* D'\tacc })\delta/\rho.
}
The desired result follows by rearranging.
\end{proof}

\section{Finite arithmetic analysis}\label{a10}
\newcommand{\ballentrywise}{\ball^{\textnormal{rel}}}
\newcommand{\ballnormwise}{\ball^{\textnormal{nw}}}
\newcommand{\ballstar}{\ball^{(*)}}
\newcommand{\kappaentrywise}{\kappa^{\textnormal{ew}}}
\newcommand{\kappanormwise}{\kappa^{\textnormal{nw}}}
\newcommand{\kappastar}{\kappa^{(*)}}

\newcommand{\nsdist}{\dist_{\textnormal{1}}}
\newcommand{\sdist}{\dist_{\textnormal{2}}}

\newcommand{\xistar}{\xi^{(*)}}
\newcommand{\xistrong}{\xi^{\textnormal{ew}}}
\newcommand{\xiweak}{\xi^{\textnormal{nw}}}
\newcommand{\normentryset}{\set{\textnormal{entrywise},\textnormal{normwise}}}
\newcommand{\strongweakset}{\set{\textnormal{weak},\textnormal{strong}}}
\newcommand{\meps}{{\mathbf u}}
The previous section showed that every instantiation of \Cref{a19} (i.e. every process for selecting $S$ in line 6) with randomized size $k$ pivoting has the same expected behavior in terms of $\Gamma(\cdot)$.
To obtain a convergence result in the presence of round-off error, one needs to assume the updates line 8 and 9 are performed with small mixed forward-backward error (see the diagram \eqref{a40}).

In this setting, $\Gamma(\cdot)$ will play two roles: the first role, as before, is that the convergence of $\Gamma(\cdot)\to0$ tracks the convergence of the algorithm. The second role is in controlling the accumulation of the error acquired during each iteration. 

We let $\wt A\tot$, $\wt B\tot$ denote the state of the one- and two-sided algorithm respectively after $t$ iterations are performed in finite precision with randomized size $k$ pivoting (to contrast with $A\tot$, $B\tot$ without tildes computed exactly). The stability of each iteration can be defined in terms of the following commutative diagram, corresponding to a mixed forward-backward guarantee.
\begin{equation}\label{a40}
\begin{tikzcd}
	{\wt A\tot} & {\wt A\totp1} \\
	{\wt A\totp{1/3}} & {\wt A\totp{2/3}}
	\arrow["\textnormal{floating}", from=1-1, to=1-2]
    	\arrow["\approx"{description}, tail reversed, from=1-1, to=2-1]
	\arrow["\textnormal{exact}", from=2-1, to=2-2]
	\arrow["\approx"{description}, tail reversed, from=1-2, to=2-2]
\end{tikzcd}
\quad\quad\quad\quad\quad
\begin{tikzcd}
	{\wt B\tot} & {\wt B\totp1} \\
	{\wt B\totp{1/3}} & {\wt B\totp{2/3}}
	\arrow["\textnormal{floating}", from=1-1, to=1-2]
    	\arrow["\approx"{description}, tail reversed, from=1-1, to=2-1]
	\arrow["\textnormal{exact}", from=2-1, to=2-2]
	\arrow["\approx"{description}, tail reversed, from=1-2, to=2-2]
\end{tikzcd}
\end{equation}
Later in this section, we will need to refer to the state at various iterations and ``one-third'' iterations. To avoid ambiguity, we always take $t$ to be an integral index (i.e. refers to the state after a complete number of iterations) and $s$ an index in $\frac13\N$ (i.e. can refer to an in-between state).
The diagram commutes in the following sense: if $\wt A\totp1$ is the result of performing one iteration on $\wt A\tot$ in floating point arithmetic, then we are guaranteed the existence of ``intermediate states'' $\wt A\totp{1/3}$ which is close to $\wt A\tot$ and $\wt A\totp{2/3}$ which is close to $\wt A\totp1$ such that $\wt A\totp{2/3}$ is the result of applying one iteration to $\wt A\totp{1/3}$ in exact arithmetic. Analogous states exist for the two-sided variant.
We define ``closeness'' by
\[A\approx A'\iff \nsdist(A,A')\le\eps\qand B\approx B'\iff \sdist(B,B')\le\eps\]
for pseudo-metrics
\[
\nsdist(A,A')=
\sup_j
\magn{\frac{a_j}{ \magn{a_j} }-\frac{a_j'}{ \rmagn{a_j'} }}
\qand
\sdist(B,B')=\sup_{ij}\abs{
\frac{b_{ij}}{\sqrt{b_{ii}b_{jj}}}
-
\frac{b'_{ij}}{\sqrt{b'_{ii}b'_{jj}}}}\]
where $a_j,a'_j$ is the $j$th column of $A,A'$ respectively. Note that we are implicitly assuming here that the diagonal entries of $B$ and $B'$ are positive. Importantly, $\nsdist(A,A')\le\eps$ implies that $\sdist(A^T A,A'^TA')\le2\eps+\eps^2=O(\eps)$ so it suffices to consider only the hypothesis of the two-sided assumption of \eqref{a40}.

\subsection{Control on $\Gamma(\wt B\tot)$}
We begin by proving a finite-arithmetic analog of \Cref{a38}, i.e., we wish to show $\Gamma(\wt B\tot)$ exhibits linear convergence. In the last section, specifically, \eqref{a39}, we showed that the stochastic process $C_{n,k}^{-t}\Gamma(B\tot)$ was a martingale where $C_{n,k}=\factor$. Thus $C^{t}\to0$ roughly implied $\Gamma(B\tot)\to0$ at the same rate.
In this section, we define a new martingale and show $C_{n,k}^{-t}\Gamma(\wt B\tot)$ is (nearly) bounded by it with high probability.
To define the martingale, we introduce the following random variables.\[
X_t=\frac{\Gamma(\wt B\totp{2/3})}{\Gamma(\wt B\totp{1/3})}
\qand Y_{t_1t_2}=\prod_{t=t_1}^{t_2}X_t\]
where we use the convention $Y_{t_1t_2}=1$ if $t_2<t_1$.
\begin{lemma}[Martingale behavior]\label{a41}
For any index $t_1$, the process\[\set{C_{n,k}^{t_1-1-t}Y_{t_1,t}}_{t=t_1}^\infty\] is a nonnegative martingale.
\end{lemma}
\begin{proof}
\Cref{a37} states that
\[\E\pare{X_t\,|\,\wt B\totp{1/3}}=\factor.\] \Cref{a36} shows that $X_t$ is a deterministic function of $\wt B\totp{1/3}$ and the randomly chosen pivot $J\in\binom{[n]}k$.
Since the pivot chosen at each step is independent of previously chosen pivots, one has
\[
\E\pare{X_t\,|\,X_{t-1},\ldots,X_0}
=C_{n,k}
\implies\E(Y_{t_1,t}\,|\,Y_{t_1,t-1},\ldots,Y_{t_1,t_1})=C_{n,k} Y_{t_1,t-1}
.\]
\end{proof}

Our next lemma allows us to control the change in $\Gamma(\cdot)$ between the states $\wt B\totpm{1/3}$ and the state $\wt B\tot$.

\begin{lemma}[Perturbation theory for $\Gamma(\cdot)$]\label{a42}
Let $B$ and $P$ be square matrices with positive diagonal entries. Suppose that $B$ is positive definite and that $\sdist(B,P)\le\eps$ for $\eps<1/(n\Gamma(B)+n^2)$ then $P$ is also positive definite and
\[
\abs{\Gamma(B)-\Gamma(P)}\le n\eps\cdot \frac{(\Gamma(B)+n)^2}{1-n\eps(\Gamma(B)+n)}.
\]
\end{lemma}
\begin{proof}
Let $X=D_B^{-1/2}BD_B^{-1/2}$ and $Y=D_P^{-1/2}PD_P^{-1/2}$.
By definition of $\Gamma(\cdot)$, we have
\[\Gamma(B)=\Gamma(X)=\tr(X^{-1})-n\qand\Gamma(P)=\Gamma(Y)=\tr(Y^{-1})-n.\]
Consequently, we can express their difference as
\spliteq{}{
\abs{\Gamma(X)-\Gamma(Y)}
  =\abs{\tr(X^{-1}-Y^{-1})}
  &=\abs{\tr\pare{X^{-1}\pare{Y-X}Y^{-1}}}
\\&=\abs{\tr\pare{Y^{-1}X^{-1}\pare{Y-X}}}.}
Since the entries of $Y-X$ are bounded by $\eps$, we can apply H\"older's inequality and Cauchy-Schwarz to obtain
\[
\abs{\tr\pare{Y^{-1}X^{-1}\pare{Y-X}}}
\le\eps\sum_{i,j}\abs{(Y^{-1}X^{-1})_{ij}}
\le\eps\magn{Y^{-1}}_F\magn{X^{-1}}_F.\]
Then by rearranging the triangle inequality
\spliteq{}{
\magn{Y^{-1}}_F
  &\le\magn{X^{-1}}_F+\magn{X^{-1}-Y^{-1}}_F
\\&\le\magn{X^{-1}}_F+\magn{X^{-1}}_F\magn{Y^{-1}}_F\magn{X-Y}_F
} one obtains
\[
\magn{Y^{-1}}_F
\le\frac{\magn{X^{-1}}_F}{1 - \magn{Y-X}_F\magn{X^{-1}}_F }
\le\frac{\magn{X^{-1}}_F}{1 - n\eps\magn{X^{-1}}_F }.\]
Finally, note that the above quantity is monotone in $\rmagn{X^{-1}}_F$ and apply
\[
\magn{X^{-1}}_F\le\tr(X^{-1})=\Gamma(X)+n.\]
Note that this argument applies if $Y$ is replaced with any matrix in the line segment $Y_s=sY+(1-s)X$. In particular, $\magn{Y_s^{-1}}_F$ is finite for each choice of $s\in[0,1]$. Since $Y_t$ is a continuous path among Hermitian invertible matrices with one end point, $X$, that is positive definite, we conclude that the other end point, $Y$, is positive definite as well.
\end{proof}

To stitch these two lemmas together, we essentially argue that by selecting $\eps$ sufficiently small, one can ensure that the additive error one obtains from \Cref{a42} between states $\wt B\totm{1/3}$ to $\wt B\totp{1/3}$ does not accumulate too much.

\begin{theorem}\label{a43}
For any time step $\tau$ and parameters $\rho,\delta>0$, if
\[
\eps\le\frac{\delta\rho}{ 2n(\Gamma(B)\tau\rho^{-1}+\delta+n)^2\cdot\tau^2}\]
then
\[
\Pr\pare{\Gamma(\wt B\tot)\le\Gamma(B)\factor^t\rho^{-1}+\delta\quad\forall t<\tau}\ge1-2\rho.\]
\end{theorem}
\begin{proof}
Set $C=1-\frac{n(n-1)}{k(k-1)}$.
Let $y=\tau\rho^{-1}$.
Let $\Omega$ be the event that
\[\Omega\equiv\sup_{0\le t_1,t_2<\tau}C^{t_1-1-t_2}Y_{t_1t_2}\le y\]
and $E_t$ the event that
\[E_t\equiv \Gamma(\wt B\tot)\le\Gamma(B)Y_{0,t-1}+\delta.\]
We claim that $\Omega\subset E_0\cap\cdots\cap E_{\tau-1}$. We argue via induction on $t$. The base case of $\Omega\subset E_0$ follows since $E_0$ is guaranteed. It thus suffices to argue that $\Omega\cap E_0\cap\ldots\cap E_t\subset E_{t+1}$. Condition on the events $\Omega,E_0,\ldots,E_t$.
Define
\[
d_t=\Gamma(\wt B\totp{1/3})-\Gamma(\wt B\totm{1/3})
\qand
d'_t=\Gamma(\wt B\tot)-\Gamma(\wt B\totm{1/3})\]
where we adopt the convention $\wt B^{(-1/3)}=\wt B^{(0)}$
Then
\spliteq{\label{a44}}{
\Gamma(\wt B\totp1)
  &=  \Gamma(B)Y_{0t}+\sum_{j=0}^td_jY_{jt}+d'_{t+1}
\\&\le\Gamma(B)Y_{0t}+(t+1)y\sup_{0\le j\le t}d_j+d'_{t+1}.}
\Cref{a42} shows how to bound $d_i,d'_i$ in terms of $\Gamma(\wt B\tot)$. Specifically,
\eq{\label{a45}
d_j,d'_j
\le 2n\eps\cdot\frac{(\Gamma(\wt B^{(j)}) + n)^2}{1 - n\eps (\Gamma(\wt B^{(j)}) + n)}
\le 2n\eps\cdot\frac{(\Gamma(B)C^jy+\delta + n)^2}{1 - n\eps (\Gamma(B)C^jy+\delta + n)}}
where we use $\Omega,E_j$ in the second inequality. The selection of $\eps$ is such that \eqref{a44} with \eqref{a45} implies $E_{t+1}$ occurs, completing the induction step. The consequence is
\[\Pr(E_{\tau-1}\cap\cdots\cap E_0)\ge\Pr(\Omega).\]
Now consider the event
\[\Omega'\equiv\sup_{t<\tau}C^{-1-t}Y_{0t}\le\rho^{-1}.\]
Then
\[
\Omega'\cap E_t\implies \Gamma(\wt B\tot)\le\Gamma(B) C^t\rho^{-1}+\delta.\]
Therefore by union bound,
\spliteq{\label{a46}}{
\Pr\pare{
\Gamma(\wt B\tot)\le\Gamma(B) C^t\rho^{-1}+\delta\quad\forall t<\tau
}
  &\ge \Pr(E_{\tau-1}\cap\cdots\cap E_0\cap\Omega')
\\&\ge \Pr(E_{\tau-1}\cap\cdots\cap E_0)-\Pr(\neg\Omega')
\\&\ge \Pr(\Omega)-\Pr(\neg\Omega').}
By \Cref{a41}, events $\Omega,\Omega'$ concern the concentration of a nonnegative supermartingale. 
For each $t_1$, one has by Ville's inequality
that
\eq{\label{a47}
\Pr\pare{\sup_{t_2\ge t_1}C^{t_1-1-t_2} Y_{t_1,t_2} \ge a}\le a^{-1}.}
Apply \eqref{a47} for $t_1$ such that $0 \leq t_1 < \tau$ and $a=y$ to obtain by union bound
\[\Pr(\neg\Omega)\le y^{-1}\tau=\rho.\]
Apply \eqref{a47} for $t_1=0$ and $a=\rho^{-1}$ to obtain
\[\Pr(\neg\Omega')\le\rho.\]
Plugging these inequalities into \eqref{a46} gives the final result.
\end{proof}

As we stated at the outset, $\Gamma(\cdot)$ plays two roles: it must converge to $0$ to show the algorithm halts, and it also must never see a large intermediate value to ensure the numerical error does not blow up. To these ends, we have two consequences of \Cref{a43}. In both, we set
\[
\hat B\tot=
\diag\pare{\wt b_{11},\ldots,\wt b_{nn}}^{-1/2}
\wt B\tot
\diag\pare{\wt b_{11},\ldots,\wt b_{nn}}^{-1/2}\]
to be the diagonal-normalization of a matrix.
\begin{corollary}[Boundedness of $\rmagn{(\hat B\tot)^{-1}}$]\label{a11}
    For any time step $\tau$ and parameters $\rho>0$, if
\[\eps\le\frac{\rho^3}{3n^3\rmagn{\hat B^{-1}}^2\tau^4}\]
then
\[
\Pr\pare{
\sup_{t<\tau}\rmagn{(\hat B\tot)^{-1}}
\le\rmagn{\hat B^{-1}}\cdot n\rho^{-1}+3}\ge1-2\rho.\]
\end{corollary}
\begin{proof}
    Apply \Cref{a34} to \Cref{a43}, set $\delta=1$, and use $\factor^t\le1$.
\end{proof}

\begin{corollary}[Approximate convergence]\label{a48}
For any $\rho\in(0,1/2),\delta\in(0,1/4)$, let
\[t\ge \frac{n(n-1)}{k(k-1)}\log\pare{{n\rmagn{\hat B^{-1}}}/{\rho\delta}}.\]
If
\[\eps\le\frac{\delta\rho^3}{3n^3\rmagn{\hat B^{-1}}^2\tau^4}\]
then
\[\max\pare{\rmagn{\hat B\tot},\rmagn{(\hat B\tot)^{-1}}}\le1+2\sqrt\delta\]
with probability $1-2\rho$.
\end{corollary}
\begin{proof}
    Apply \Cref{a34} to \Cref{a43}.
\end{proof}

\subsection{Orthogonalization is stable}
The most general result is that if each iteration of the one-sided algorithm satisfies \eqref{a40}, then the overall algorithm stably computes an orthonormal basis of the column-space of $A=\bmat{a_1&\cdots&a_n}\in\C^{d\times n}$.
This can be extended to a basis of the flag $\spn(a_1), \spn(a_1,a_2), \spn(a_1,a_2,a_3),\cdots.$
We give a handful of examples of implementations of the algorithm which satisfy \eqref{a40}.

We first need to specify a metric on the Grassmanian $\textnormal{Gr}_k(\C^n)$ so that we may quantify the error of the basis output by the algorithm. For subspaces $V$ and $W$ of the same dimension $k$, let $Q_V$ and $Q_W$ denote orthonormal basis for those spaces respectively. Then we use the following standard metric, sometimes referred to as the \textit{gap-metric}, which has a few equivalent expressions \cite{b41},
\[
\dist(V,W)=\magn{Q_VQ_V^*-Q_WQ_W^*}
=\inf_{R}\magn{Q_V-Q_WR}
=\sqrt{1-\sigma_k(Q_V^*Q_W)^2}=\sin\pare{\theta(V,W)},
\]
where $\theta$ is the largest principle angle between $V$ and $W$.

\begin{lemma}[Perturbation theory for subspaces]\label{a49}
Let $\hat A,\hat A'\in\C^{d\times n}$ have unit length columns. Then
\[\dist(\col(\hat A),\col(\hat A'))\le \frac{\sqrt n \nsdist(\hat A,\hat A')}{\max(\sigma_n(\hat A),\sigma_n(\hat A'))}.\]
\end{lemma}
\begin{proof}
Let $\hat A=QR$ be the QR decomposition of $\hat A$. Then
\spliteq{}{
    \dist(\col(A),\col(A'))
    \le
    \magn{Q-\hat A'R^{-1}}
    =
    \magn{(\hat A-\hat A')R^{-1}}
    \le
    \rmagn{\hat A-\hat A'}\magn{R^{-1}}
    =
    \frac{\rmagn{\hat A-\hat A'}}{\sigma_n(\hat A)}.}
Now,
\[
\rmagn{\hat A-\hat A'}
=
\sup_{y}\frac{\rmagn{(\hat A-\hat A')y}}{\magn x\magn y}
\le
\sqrt n\sup_{y}\frac{\rmagn{(\hat A-\hat A')y}}{\magn x\magn y_1}
=\sqrt n\nsdist(\hat A,\hat A').
\]
Finally, the metric is symmetric so we may swap the roles of $A$ and $A'$.
\end{proof}
\begin{theorem}\label{a1}
Say one is given $A\in\C^{d\times n}$. Let \(\hat A=A\diag\pare{\magn{a_1},\ldots,\magn{a_n}}^{-1}\) normalize the columns of $A$. Let $Q=A^{(\tau)}$ be the result of running the one-sided \Cref{a19} with randomized size $k$ pivoting on $A\in\C^{d\times n}$ with full column rank for
\[\tau=O\pare{n^2k^{-2}\log\pare{\frac{n}{\sigma_n(\hat A)\delta}}}\]
iterations. Assume each iteration is computed stably in the sense of \eqref{a40} for
\[
\eps\le\frac{\delta\sigma_n(\hat A)^4}{n^6\tau^4}.
\]
Then with probability $1-2/n$,
\[\magn{Q^*Q-I}\le3\sqrt\delta\qand\dist(\col(Q),\col(A))\le\delta.\]
\end{theorem}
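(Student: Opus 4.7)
My plan is to reduce to the two-sided analysis by setting $\wt B^{(s)}=(\wt A^{(s)})^*\wt A^{(s)}$ for $s\in\frac13\N$, then combine Corollary \ref{a40} (to bound $\|Q^*Q-I\|$) with Corollary \ref{a10} (to control the Gram-matrix condition number throughout) and Lemma \ref{a41} (to convert these into a subspace distance bound). As a preliminary step, observe that the exact middle step $\wt A^{(t+1/3)}\to\wt A^{(t+2/3)}$ of Algorithm 1 corresponds to the analogous two-sided step on $\wt B^{(s)}$ by construction, while the approximation $\nsdist(\wt A^{(s)},\wt A^{(s')})\le\eps$ translates to $\sdist(\wt B^{(s)},\wt B^{(s')})\le2\eps$ by a short bilinearity calculation on the inner products $b_{ij}/\sqrt{b_{ii}b_{jj}}=\inr{\hat a_i}{\hat a_j}$. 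Thus the two-sided floating-point model \eqref{a31} holds for $\wt B^{(s)}$ with accuracy $2\eps$.

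For the first inequality, I would set $\rho=1/n$ and verify that the hypotheses on $\tau$ and $\eps$ in the theorem match those of Corollary \ref{a40} (using $\rmagn{\hat B^{-1}}=\sigma_n(\hat A)^{-2}$), up to the factor of $2$ absorbed from the previous paragraph. Corollary \ref{a40} then yields $\max(\rmagn{\hat B^{(\tau)}},\rmagn{(\hat B^{(\tau)})^{-1}})\le1+2\sqrt\delta$ with probability $1-2/n$, which brackets the eigenvalues of the positive-definite $\hat B^{(\tau)}$ in $[1/(1+2\sqrt\delta),1+2\sqrt\delta]$ and hence gives $\rmagn{\hat B^{(\tau)}-I}\le2\sqrt\delta$. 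Identifying $Q$ with the column-normalized $\wt A^{(\tau)}$ (which is automatic in the orthogonalization flavor where $S^*B_{JJ}S=I$ at every step), $Q^*Q=\hat B^{(\tau)}$, establishing $\magn{Q^*Q-I}\le3\sqrt\delta$.

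For the second inequality, on the same high-probability event, Corollary \ref{a10} yields $\rmagn{(\hat B^{(t)})^{-1}}\le O(n^2/\sigma_n(\hat A)^2)$ for all integer $t<\tau$, i.e.\ $\sigma_n(\hat A^{(t)})\gtrsim\sigma_n(\hat A)/n$; this extends to the fractional states $s=t\pm1/3$ by continuity of singular values under small $\nsdist$-perturbations. Now apply a telescoping triangle inequality over the iterations,
\[\dist(\col(A),\col(Q))\le\sum_{t=0}^{\tau-1}\Bigl[\dist(\col(\wt A^{(t)}),\col(\wt A^{(t+1/3)}))+\dist(\col(\wt A^{(t+2/3)}),\col(\wt A^{(t+1)}))\Bigr],\]
where the contributions from $\dist(\col(\wt A^{(t+1/3)}),\col(\wt A^{(t+2/3)}))$ vanish because the middle step is exact and hence preserves the column space. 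Each remaining summand is bounded by Lemma \ref{a41} by $\sqrt n\eps/\sigma_n(\hat A^{(s)})\lesssim n^{3/2}\eps/\sigma_n(\hat A)$, yielding an overall bound $\lesssim\tau n^{3/2}\eps/\sigma_n(\hat A)\le\delta$ under the hypothesized $\eps\le\delta\sigma_n(\hat A)^4/(n^6\tau^4)$.

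The main obstacle is the bookkeeping: the single $\eps$ in the hypothesis must simultaneously satisfy the stringent tolerance of Corollary \ref{a40} (for the $Q^*Q$ bound), the tolerance of Corollary \ref{a10} (for the intermediate $\sigma_n$ control), and the telescoping bound for the subspace distance, all on the same probability-$1-2/n$ event produced by the martingale analysis of Theorem \ref{a34}. All three conditions are polynomial in the same quantities ($n$, $\tau$, $\sigma_n(\hat A)^{-1}$, $\delta^{-1}$), so the theorem's choice of $\eps$ suffices once $\tau$ is fixed; tracking the constants through this chain, and confirming that the same high-probability event supports both corollaries simultaneously rather than requiring an additional union bound, is the main technical care required.
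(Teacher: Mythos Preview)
Your proposal is correct and follows essentially the same route as the paper: apply Corollary~\ref{a40} for the near-orthonormality of $Q$, telescope $\dist(\col(A),\col(Q))$ over the fractional states with the middle (exact) step contributing zero, bound each surviving term via Lemma~\ref{a41}, and control the denominators $\sigma_n(\hat A^{(s)})$ uniformly using Corollary~\ref{a10}. You are in fact more careful than the paper on two points it glosses over---the passage from the one-sided accuracy $\nsdist$ to the two-sided accuracy $\sdist$ needed to invoke the corollaries, and the identification of $Q$ with the column-normalized iterate---and your concern about the union bound is resolved by noting that the event in Theorem~\ref{a34} with the smaller $\delta$ (used for Corollary~\ref{a40}) already implies the event with $\delta=1$ (used for Corollary~\ref{a10}), so no extra probability is lost.
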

\begin{proof}
\Cref{a48} immediately implies $\magn{Q^*Q-I}\le\delta$.
By the triangle inequality,
\spliteq{}{
\dist(\col(A),\col(\wt A^{(\tau)}))\le\sum_{t=1}^\tau
\biggr(
&\dist(\col(\wt A\totm1),\col(\wt A\totm{2/3}))
\\+
&\dist(\col(\wt A\totm{2/3}),\col(\wt A\totm{1/3}))
\\+
&\dist(\col(\wt A\totm{1/3}),\col(\wt A\tot))
\biggr)
.}
Since $\wt A\totm{1/3}$ is exactly a column operation applied to $\wt A\totm{2/3}$, the middle term in the summand vanishes.
Note that one may replace $\wt A\tos$ with its column-normalization $\hat A\tos$.
The first and last term are bounded by \Cref{a49}:
\[
\dist(\col(A),\col(\wt A^{(\tau)}))\le\tau\sup_{t\in[\tau]}
\pare{
\frac{\sqrt n\nsdist(\hat A\totm1,\hat A\totm{2/3})}{ \max(\sigma_n(\hat A\totm1),\sigma_n(\hat A\totm{2/3})) }
\\+
\frac{\sqrt n\nsdist(\hat A\totm{1/3},\hat A\tot)}{ \max(\sigma_n(\hat A\totm{2/3}),\sigma_n(\hat A\tot)) }
}.
\]
But the distances $\nsdist(\cdot,\cdot)$ in the numerator are bounded by $\eps$ by assumption. Thus we have
\[\dist(\col(A),\col(\wt A^{(\tau)}))\le\frac{2\tau\sqrt n}{ \inf_{s\le\tau} \sigma_n(\hat A\tos) }\cdot\eps.\]
\Cref{a11} shows that
\[\pare{\inf_{s\le\tau} \sigma_n(\hat A\tos)}^{-2}\le\sigma_n(\hat A)^{-2}n^2+3\] with probability $1-2/n$. The selection of $\eps$ gives the desired result.
\end{proof}

\begin{remark}\label{a50}
    The following update rules orthogonalize a pair of unit vectors and can be performed in floating point arithmetic with unit round-off $\meps$ to satisfy \eqref{a40} with $\eps=O(n)\meps$. We describe them for $J=\set{1,2}$ with $\alpha=\inr{a_1}{a_2}$.
\begin{center}
    \begin{tikzpicture}[scale=2,>=Stealth]\label{a51}
\begin{scope}[xshift=0cm]
  \draw[->, line width=1.5pt, black] (0,0) -- (0:1);
  \draw[->, line width=1.5pt, black] (0,0) -- (60:1);
  \draw[->, line width=1.5pt, blue] (0,0) -- (30:1);
  \draw[->, line width=1.5pt, blue] (0,0) -- (30+90:1);
\end{scope}
\begin{scope}[xshift=-2cm]
  \draw[->, line width=2pt, black] (0,0) -- (0:1);
  \draw[->, line width=1.5pt, black] (0,0) -- (60:1);
  \draw[->, line width=1.5pt, red] (0,0) -- (0:1);
  \draw[->, line width=1.5pt, red] (0,0) -- (90:1);
\end{scope}
\begin{scope}[xshift=2cm]
  \draw[->, line width=1.5pt, black] (0,0) -- (0:1);
  \draw[->, line width=1.5pt, black] (0,0) -- (60:1);
  \draw[->, line width=1.5pt, yellow] (0,0) -- (-15:1);
  \draw[->, line width=1.5pt, yellow] (0,0) -- (60+15:1);
\end{scope}
\end{tikzpicture}
\end{center}
The rule depicted the left (red) corresonds to Gram-Schmidt:
\eq{\label{a52}
    \bmat{a_1&a_2}\gets
\bmat{
a_1&
\frac{a_2-\alpha a_1}{\sqrt{1-\alpha^2}}}.
}
The rule depicted in the middle (blue) is a normalized version of taking the SVD:
\eq{\label{a53}
    \bmat{a_1&a_2}\gets
\bmat{
\frac{a_1+a_2}{\sqrt{2+2\alpha}}
&
\frac{a_1-a_2}{\sqrt{2-2\alpha}}}.
}
The rule depicted on the right (yellow) is a rule that treats the inputs symmetrically, and is equivalent to applying the rule depicted in the middle twice in a row:
\eq{\label{a54}
    \bmat{a_1&a_2}\gets
\bmat{
\frac{
(\sqrt{2-2\alpha}+\sqrt{2+2\alpha})a_1
+
(\sqrt{2-2\alpha}-\sqrt{2+2\alpha})a_2
}{\magn{(\sqrt{2-2\alpha}+\sqrt{2+2\alpha})a_1
+
(\sqrt{2-2\alpha}-\sqrt{2+2\alpha})a_2}}
&
\frac{
(\sqrt{2-2\alpha}-\sqrt{2+2\alpha})a_1
+
(\sqrt{2-2\alpha}+\sqrt{2+2\alpha})a_2
}{\magn{(\sqrt{2-2\alpha}-\sqrt{2+2\alpha})a_1
+
(\sqrt{2-2\alpha}+\sqrt{2+2\alpha})a_2}}.
}.
}
\end{remark}

\begin{figure}[h!]
    \centering
\includegraphics[width=0.5\linewidth]{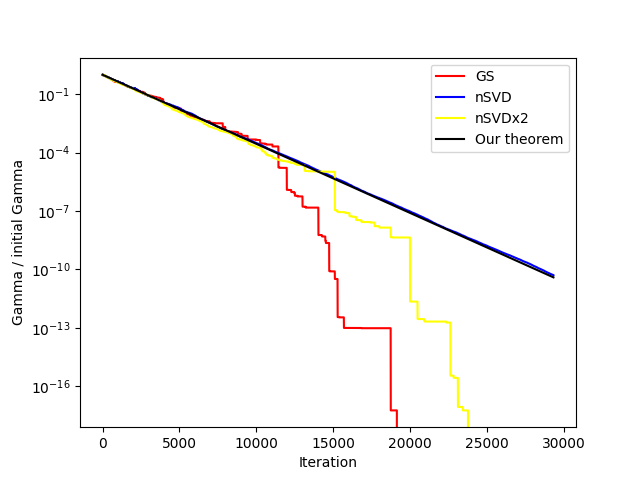}
    \caption{Demonstration of evolution of $\frac{\Gamma((A\tot)^*A\tot)}{\Gamma((A^{(0)})^*A^{(0)})}$ as iterations are performed according to each of the three rules depicted in \Cref{a50}. Rule \cref{a52} is labeled GS in red. Rule \cref{a53} is labeled nSVD in blue. Rule \cref{a54} is labeled nSVDx2 in yellow. The plot for is a single trial applied to a 50x50 matrix with independent Haar unit vector columns. An interesting future line of inquery is understanding why nSVD seems to tightly hug the expected value, whereas GS and nSVD seem to have larger variance or thicker tails leading to a large probability that the observed behavior is better than the expectation.}
    \label{a55}
\end{figure}

\begin{remark}[The QR decomposition]
One can strengthen the result of \Cref{a1} if one implements the update $A_J\gets A_JS$ by performing modified Gram-Schmidt to $A_J$. The returned basis $Q=\bmat{q_1&\cdots&q_n}$ approximates the flag:
    \[\dist(\spn(q_1,\ldots,q_m), \spn(a_1,\ldots,a_m))\le\delta\]
for all $m\le n$.
To see this, note that when using MGS in this way, the ordering of the pivot indices $i_1<\cdots<i_k$ in line 5 ensures that each column is completely unaffected by columns to their right.
Because of this, the algorithm using MGS run on input $\bmat{a_1&\cdots&a_n}$ can be seen as simulating the same algorithm on all the inputs
\[\bmat{a_1}\qand\bmat{a_1&a_2}\qand\bmat{a_1&a_2&\cdots&a_m}\]
with a certain number of no-ops when the pivot $J$ (partially) falls outside the index set.
However, these no-ops exactly counteract the reduced number of iterations required by the smaller input size for convergence: just replace $k$ with the random variable $\abs{J\cap\set{1,\cdots,m}}$. The result would then follow by \Cref{a1}.
\end{remark}

\subsection{Diagonalization and SVD are stable}
Demmel and Veseli{\'c} study the stability of the Jacobi eigenvalue algorithm with an arbitrary pivoting rule. Their results make two assumptions. The first is that the pivoting rule is such that the method converges after some specified number of steps. The second is that the diagonal-normalized condition number of any iterate remains bounded. Under our randomized pivoting rule, both properties are satisfied.  We restate the results of \cite{b0} concerning eigenvalue and singular value error in terms of our notation.

\begin{theorem}[{\cite[Corollary 3.2]{b0}}]\label{a2}
    Given positive definite $B\in\C^{n\times n}$, let $\wt B\tot$ be the $t^{\text{th}}$ iterate of the two-sided \Cref{a19}
    using a Givens rotation in step 6 (i.e. use $k=2$ and follow the Jacobi eigenvalue algorithm) with unit round-off $\meps$.
    Then for each $\tau$,
    \[\sup_{j\in[n]}\frac{\abs{\lambda_j(B)-\wt\lambda_j^{(\tau)}}}{\lambda_j(B)}
    \le O\pare{\sqrt n\tau \meps+n\sdist(\wt B^{(\tau)},I)}\sup_{t\le\tau}\hat\kappa(\wt B\tot)\]
    where $\wt\lambda_j^{(\tau)}$ is the $j$th largest diagonal entry of $\wt B^{(\tau)}$.
\end{theorem}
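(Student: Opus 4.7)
The plan is to recognize this statement as a restatement of Corollary 3.2 of \cite{b0} in the notation of this paper, so the proof reduces to (a) verifying that the algorithmic setup matches theirs and (b) translating each quantity in their bound. No genuinely new mathematical content is needed beyond careful bookkeeping.

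First I would verify the setup. Taking $k=2$ in line 5 and constructing $S$ as the $2\times 2$ unitary that diagonalizes $\wt B\tot_{JJ}$ in line 6 makes the transformation $T$ in line 7 a Givens (plane) rotation, so each iteration $\wt B\totp1 = (T^{-1})^*\wt B\tot T^{-1}$ is exactly one step of the classical two-sided Jacobi eigenvalue algorithm. Since the floating-point analysis of \cite{b0} is performed per rotation and does not depend on how the pivot is chosen, it applies verbatim to our randomized pivoting; the randomness only affects convergence, not the per-iteration roundoff.

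Next I would match the quantitative terms. The scaled condition number appearing in \cite{b0} is $\kappa(D_B^{-1/2} B D_B^{-1/2})$, which is exactly $\hat\kappa(B)$ in our notation, and their assumption that this quantity remains controlled throughout the run translates directly to our $\sup_{t\le\tau}\hat\kappa(\wt B\tot)$ factor. Their cumulative backward-error bound is $O(\meps)$ per rotation; summing over $\tau$ rotations and passing from the columnwise backward error to a symmetric perturbation $\Delta B$ of $B$ with $\magn{D_B^{-1/2}\Delta B\,D_B^{-1/2}} = O(\sqrt n\,\tau\,\meps)$ contributes the $\sqrt n$. Feeding this perturbation into the relative Weyl-type inequality (Corollary 2.3 of \cite{b0}) yields the first term $\sqrt n\,\tau\,\meps\cdot\sup_{t\le\tau}\hat\kappa(\wt B\tot)$.

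The second term $n\sdist(\wt B^{(\tau)}, I)\cdot\sup_{t\le\tau}\hat\kappa(\wt B\tot)$ accounts for approximating the eigenvalues of the near-diagonal iterate $\wt B^{(\tau)}$ by its diagonal entries $\wt\lambda^{(\tau)}_j$. By a Gerschgorin-type argument applied to the scaled matrix $D_{\wt B^{(\tau)}}^{-1/2}\wt B^{(\tau)}D_{\wt B^{(\tau)}}^{-1/2}$, whose off-diagonal entries are bounded in magnitude by $\sdist(\wt B^{(\tau)}, I)$, the relative gap between an eigenvalue and its nearest diagonal entry is $O(n\sdist(\wt B^{(\tau)}, I))$, again with the scaled condition number appearing as a multiplicative cost. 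The triangle inequality combines the two error sources.

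The main obstacle will be the careful bookkeeping of norm conversions (columnwise versus Frobenius versus spectral), which is where the $\sqrt n$ of the first term and the $n$ of the second term arise, and confirming that the hypothesis of \cite{b0} on boundedness of the scaled condition number along the iterates matches $\sup_{t\le\tau}\hat\kappa(\wt B\tot)$ as stated rather than some related but distinct quantity. I do not anticipate any essentially new analysis.
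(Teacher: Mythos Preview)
Your proposal is correct and matches the paper's approach: the paper does not prove this theorem at all but simply restates Corollary~3.2 of \cite{b0} in its own notation, so the only content is exactly the translation you outline. Your additional sketch of how the $\sqrt n\,\tau\,\meps$ and $n\,\sdist$ terms arise in \cite{b0} goes beyond what the paper provides, but is accurate and harmless.
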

\begin{theorem}[{\cite[Corollary 4.2]{b0}}]\label{a3}
    Given positive definite $A\in\C^{n\times n}$, let $\wt A\tot$ be the $t^{\text{th}}$ iterate of the one-sided \Cref{a19}
    using a Givens rotation in step 6 (i.e. use $k=2$ and follow the Jacobi eigenvalue algorithm) with unit round-off $\meps$.
    Then for each $\tau$,
    \[\sup_{j\in[n]}\frac{\abs{\sigma_j(B)-\wt\sigma_j^{(\tau)}}}{\sigma_j(B)}
    \le O\pare{ \tau\meps + n^2\meps + n\sdist(A^{(\tau)*}A^{(\tau)},I)}\sup_{t\le\tau}\hat\kappa(\wt A\tots \wt A\tot)\]
    where $\wt\sigma_j^{(\tau)}$ is the length of the $j$th longest column of $\wt A^{(\tau)}$, the infimum is taken over orthogonal matrices, and $\wt B\tot=\wt A\tots \wt A\tot$.
\end{theorem}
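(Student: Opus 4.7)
The statement is essentially Corollary 4.2 of \cite{b0} translated into our notation, so the plan is to follow the Demmel--Veseli{\'c} argument verbatim, exploiting the fact that their analysis makes no assumption about the pivoting rule---only that each iteration is a right-multiplication by an approximately unitary matrix satisfying mixed forward-backward error $O(\meps)$, which is precisely the content of \eqref{a31} for Givens rotations.

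I would proceed in three stages. First, in exact arithmetic, right-multiplication of $\wt A\tot$ by the unitary $T^{-1}$ (built from the Givens rotation $S$) preserves singular values exactly. In floating point, the diagram \eqref{a31} guarantees that the computed $\wt A\totp1$ factors as $\wt A\tot T^{-1}(I + E_t)$ for some $E_t$ with $\magn{E_t} = O(\meps)$. Applying multiplicative singular value perturbation theory then yields a relative error of $O(\meps)$ per step, accumulating to $O(\tau\meps)$ after $\tau$ iterations; the extra $O(n^2\meps)$ term absorbs the cost of evaluating the final column norms. Second, having shown $\sigma_j(\wt A^{(\tau)})$ is close to $\sigma_j(A)$, I would compare $\sigma_j(\wt A^{(\tau)})$ to the $j$th-longest column length $\wt\sigma_j^{(\tau)}$. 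When $\sdist(\wt A^{(\tau)*}\wt A^{(\tau)}, I) \le \eta$, the diagonal-normalized Gram matrix is close to the identity, so standard perturbation bounds show the column norms approximate the singular values with relative error $O(n\eta) \cdot \hat\kappa(\wt A^{(\tau)*}\wt A^{(\tau)})$.

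The main obstacle, and the reason Demmel and Veseli{\'c} went to significant lengths in \cite{b0}, is obtaining a relative rather than absolute error bound. A direct application of Weyl's inequality would produce $\kappa(A)$ in place of $\hat\kappa$, which is wildly pessimistic for matrices with columns of very different norms. The sharper bound comes from the fact that the $T$-transformations are applied on one side only and hence are equivariant under right-diagonal scaling; the relative perturbation theory for this class of transformations replaces $\kappa$ with the scale-invariant $\hat\kappa$. The supremum of $\hat\kappa(\wt A\tots \wt A\tot)$ over $t \le \tau$ appears because floating-point errors at each step are amplified by the conditioning of that step's iterate, not just the final one; this supremum is in turn controlled by Corollary \ref{a10}, which is where the randomized pivoting rule re-enters the story and closes the loop between the exact-arithmetic convergence results and the finite-arithmetic stability guarantees.
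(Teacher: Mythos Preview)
The paper does not supply its own proof of this theorem: it is stated as a restatement of Corollary~4.2 of \cite{b0}, and the paper's contribution is only to plug in the bounds from Corollaries~\ref{a10} and~\ref{a40} afterward (Corollary~\ref{a4}). Your proposal correctly recognizes this and outlines the Demmel--Veseli{\'c} argument itself, so you are in complete agreement with the paper's approach; your final paragraph about Corollary~\ref{a10} and randomized pivoting is strictly speaking part of the proof of Corollary~\ref{a4} rather than of the present theorem, but you flag this distinction yourself.
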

Our results allow us to provide a concrete upper bound for randomized size 2 pivoting.
\begin{corollary}\label{a4}When using a randomized size $2$ pivoting rule in the above theorems with $\eps$ as in \Cref{a48}, taking $$\tau=O(n^2\log(n\hat\kappa(B)/\delta))$$ iterations ensures
\[
\sdist(\wt B\tot,I)\le \sqrt\delta
\qquad \text{and} \qquad
\sup_{t\le\tau}\hat\kappa(\wt B\tot)
\le\kappa(\wt B)n^3+3n.\]
with probability $1-O(1/n)$
\end{corollary}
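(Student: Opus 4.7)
The plan is to combine two earlier results: Corollary \ref{a40} will control the terminal iterate $\wt B\tot$ at $t = \tau$, while Corollary \ref{a10} will control the diagonal-normalized condition number along the entire trajectory. Both will be invoked with failure parameter $\rho = 1/n$, so that a union bound at the end produces the target probability $1 - O(1/n)$.

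For the first conclusion, I would apply Corollary \ref{a40} with $\delta/4$ in place of its $\delta$, and $\rho = 1/n$. The iteration count requirement becomes $t \ge \frac{n(n-1)}{2}\log\pare{4n^2\rmagn{\hat B^{-1}}/\delta}$, which is $O(n^2\log(n\hat\kappa(B)/\delta))$ and is covered by the hypothesis on $\tau$. The conclusion yields $\max\pare{\rmagn{\hat B^{(\tau)}}, \rmagn{(\hat B^{(\tau)})^{-1}}} \le 1 + \sqrt\delta$ with probability at least $1 - 2/n$. Since $\hat B^{(\tau)}$ is positive definite with unit diagonal, its eigenvalues lie in $[(1+\sqrt\delta)^{-1}, 1+\sqrt\delta]$, so $\rmagn{\hat B^{(\tau)} - I} \le \sqrt\delta$; because each entry of a matrix is bounded by its operator norm, this gives $\sdist(\wt B^{(\tau)}, I) = \sup_{i,j}\abs{(\hat B^{(\tau)})_{ij} - \delta_{ij}} \le \sqrt\delta$.

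For the second conclusion, apply Corollary \ref{a10} with the same $\tau$ and $\rho = 1/n$, obtaining $\sup_{t<\tau}\rmagn{(\hat B^{(t)})^{-1}} \le n^2\rmagn{\hat B^{-1}} + 3$ with probability at least $1 - 2/n$. Since $\hat B^{(t)}$ is positive semidefinite with unit diagonal, the trivial bound $\rmagn{\hat B^{(t)}} \le \tr\hat B^{(t)} = n$ holds. Multiplying gives $\hat\kappa(\wt B^{(t)}) = \rmagn{\hat B^{(t)}}\cdot\rmagn{(\hat B^{(t)})^{-1}} \le n^3\rmagn{\hat B^{-1}} + 3n$. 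Setting $D = \diag(\wt B_{11},\ldots,\wt B_{nn})$, the identity $\hat B^{-1} = D^{1/2}\wt B^{-1}D^{1/2}$ combined with $\max_i \wt B_{ii} \le \rmagn{\wt B}$ yields $\rmagn{\hat B^{-1}} \le \rmagn{\wt B}\cdot\rmagn{\wt B^{-1}} = \kappa(\wt B)$, which completes the claimed bound. A union bound over the two failure events then yields total success probability $1 - 4/n = 1 - O(1/n)$.

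No step presents a substantive obstacle: the heavy lifting has already been performed in Corollaries \ref{a10} and \ref{a40}. The only bookkeeping care needed is that the $\eps$ constraint of Corollary \ref{a40} is tighter than that of Corollary \ref{a10} by an extra factor of $\delta/4$ in the numerator, so the former becomes the effective constraint and both corollaries can be applied simultaneously under a single common hypothesis on $\eps$.
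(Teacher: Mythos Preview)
Your proposal is correct and follows essentially the same approach as the paper: invoke Corollary~\ref{a40} for the terminal $\sdist$ bound and Corollary~\ref{a10} together with the elementary estimate $\rmagn{\hat B\tot}\le n$ for the trajectory-wide condition number bound, then combine with a union bound. You have simply filled in the details (the passage from $\rmagn{\hat B^{(\tau)}-I}$ to $\sdist$, the bound $\rmagn{\hat B^{-1}}\le\kappa(\wt B)$, and the bookkeeping on $\eps$ and $\rho$) that the paper's two-line proof leaves implicit.
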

\begin{proof}
    The bound on $\sdist(\wt B\tot,I)$ follows from \Cref{a48}. The bound on $\sup\hat\kappa(\cdot)$ follows from \Cref{a11} along with the observation that $\rmagn{\hat B}\tot\le n$.
\end{proof}

Similar bounds for the eigenvectors and singular vectors can also be found in \cite{b0} (see their Theorems 3.3 and 4.3). We omit repeating the precise statements, but note that the application of \Cref{a4} provides concrete bounds analogous to the bounds for eigenvalues and singular values.%

\section*{Acknowledgments}

We thank Nikhil Srivastava who brought the Jacobi eigenvalue algorithm to our attention when discussing extensions of our earlier work \cite{b35}. We thank Jim Demmel and Ryan Schneider for helpful conversations about the Jacobi eigenvalue algorithm, and the anonymous reviewers for helpful feedback.

\bibliographystyle{alpha}
\bibliography{outbib}

\appendix

\end{document}